\documentclass[12pt]{amsart}

\usepackage{amssymb,amsfonts, amsmath, amsaddr, dutchcal, mathtools}

\usepackage{hyperref}
\usepackage{xcolor} 
\usepackage[normalem]{ulem} 
\usepackage{enumerate}
\usepackage{amsthm}
\usepackage{thmtools}
\usepackage{cleveref}

\newtheorem{thm}{Theorem}[section]
\newtheorem{lemma}[thm]{Lemma}
\newtheorem{corollary}[thm]{Corollary}
\newtheorem{proposition}[thm]{Proposition}

\theoremstyle{definition}

\newtheorem{defn}[thm]{Definition}
\newtheorem{example}[thm]{Example}
\newtheorem{remark}[thm]{Remark}

\newcommand{\enclosepart}[1]{(#1)}
\newcommand{\partref}[1]{\enclosepart{\ref{#1}}}

\def\N{\mathbb{N}}
\def\R{\mathbb{R}}
\def\C{\mathbb{C}}
\def\L{\mathbb{L}}

\def\lam{\lambda}

\def\eps{\varepsilon}
\def\phi{\varphi}
\def\Eps{\mathcal{E}}
\def\0{{\bf 0 }}
\def\1{{\bf 1}}

\newcommand{\upC}{{\mathrm{C}}}

\newcommand{\dis}[1]{\displaystyle{#1}}
\newcommand{\norm}[1]{\left\Vert #1 \right\Vert}
\newcommand{\abs}[1]{\left\vert #1 \right\vert}


\newcommand{\Dc}{De\-de\-kind com\-plete}
\newcommand{\sDc}{$\sigma$-De\-de\-kind com\-plete}


\newcommand{\set}[1]{\{#1\}}

\newcommand{\pos}[1]{{#1^+}}
\newcommand{\negt}[1]{{#1^-}}
\newcommand{\zerofunction}{\textbf{0} }
\newcommand{\onefunction}{\textbf{1}}
\newcommand{\indicator}[1]{\chi_{#1}}
\newcommand{\unit}{1}

\newcommand{\seq}[1]{ ( {#1}_n )}

\newcommand{\pnorm}[1]{\left\Vert #1 \right\Vert_p}
\newcommand{\qnorm}[1]{\left\Vert #1 \right\Vert_q}
\newcommand{\enorm}[1]{\left\Vert #1 \right\Vert_{\infty}}


\newcommand{\pset}{S}
\newcommand{\stone}{K}

\def\lam{\lambda}
\newcommand{\ls}{\L}
\newcommand{\posls}{\pos{\ls}}
\newcommand{\lsext}{\overline{\ls}}
\newcommand{\poslsext}{\overline{\posls}}


\newcommand{\salg}{\Sigma}
\newcommand{\mss}{A}


\newcommand{\ms}{(\pset,\salg)}
\newcommand{\msm}{(\pset,\salg,\npm,\L)}


\newcommand{\npm}{\mu}


\newcommand{\poslstepfun}{{{\mathcal S}(\pset,\salg;\pos{\ls_1})}}
\newcommand{\lstepfun}{{{\mathcal S}(\pset,\salg;\ls_1)}}
\newcommand{\poslmeasfun}{{{\mathcal M}(\pset,\salg;\pos{\ls})}}
\newcommand{\poslmeasfunext}{{{\mathcal M}(\pset,\salg;\poslsext)}}
\newcommand{\lmeasfun}{{{\mathcal M} (\pset,\salg;\ls)}}
\newcommand{\integrablelfun}{{{\mathcal L}^1(\pset,\salg,\npm;\ls)}}
\newcommand{\posintegrablelfun}{{{\mathcal L}^1(\pset,\salg,\npm;\posls)}}
\newcommand{\pintegrablelfun}{{{\mathcal L}^p(\pset,\salg,\npm;\ls)}}
\newcommand{\qintegrablelfun}{{{\mathcal L}^q(\pset,\salg,\npm;\ls)}}
\newcommand{\eblfun}{{{\mathcal L}^{\infty}(\pset,\salg,\npm;\ls)}}
\newcommand{\kerone}{{{\mathcal N}_1(\pset,\salg,\npm;\ls)}}
\newcommand{\kerp}{{{\mathcal N}_p(\pset,\salg,\npm;\ls)}}
\newcommand{\kerq}{{{\mathcal N}_q(\pset,\salg,\npm;\ls)}}
\newcommand{\kerall}{\mathcal{N}}
\newcommand{\aezero}{{{\mathcal N}_{\infty}(\pset,\salg,\npm;\ls)}}

\newcommand{\ellp}{{{\mathrm L}^p(\pset,\salg,\npm;\ls)}}

\newcommand{\ebllq}{{{\mathrm L}^{\infty}(\pset,\salg,\npm;\ls)}}

\newcommand{\cont}[1]{{\upC}(#1)}
\newcommand{\continfty}[1]{\upC_{\infty}(#1)}

\newcommand{\contstone}{\cont{\stone}}
\newcommand{\continftystone}{\continfty{\stone}}


\def\d{\,\mathrm{d}}


\newcommand{\comment}[1]{}

\numberwithin{equation}{section}


\begin{document}
	
\title{$\L$-valued integration}

\author{Xingni Jiang}
\email{x.jiang@scu.edu.cn}
\address{College of Mathematics, Sichuan University, No. 24, South Section, First Ring Road, Chengdu, People’s Republic of China}

\author{Jan Harm van der Walt}
\email{janharm.vanderwalt@up.ac.za}
\address{Department of Mathematics and Applied Mathematics, University of Pretoria, Private Bag X20 Hatfield, Pretoria 0028, South Africa}

\author{Marten Wortel}
\email{marten.wortel@up.ac.za}
\address{Department of Mathematics and Applied Mathematics, University of Pretoria, Private Bag X20 Hatfield, Pretoria 0028, South Africa}

\thanks{This work is based on the research supported in part by the National Research Foundation of South Africa (Ref Number CPRR240418214705).}

\keywords{Vector measures, vector-valued functions, $f$-algebras}
\subjclass[2010]{Primary: 28B05, 46A19. Secondary: 46A40}
	
	\maketitle

\begin{abstract}
We develop integration theory for functions taking values in a Dedekind complete unital $f$-algebra $\L$ with respect to $\L$-valued measures. We then discuss and prove completeness results of $\L$-valued $L^p$-spaces.
\end{abstract}

\section{Introduction}

Let $\L$ be a Dedekind complete unital $f$-algebra. In \cite{L-functional_analysis}, $\L$-normed spaces were investigated. These are $\L$-modules equipped with an $\L^+$-valued norm satisfying the usual properties. Convergence in an $\L$-normed space $X$ is determined by order convergence in $\L$, in the sense that a net $(x_\alpha)$ in $X$ is defined to converge to $x \in X$ whenever $\norm{x_\alpha - x}$ order converges to $0$ in $\L$. One can then define Cauchy nets and completeness, and $\L$-valued `sequence' spaces $\ell^p(S,\L)$ (generalized to $S$ being any nonempty set rather than $\N$) were investigated in detail: completeness and the usual duality results were proven.

In this paper we develop a corresponding theory of $\L$-valued $L^p$-spaces, for which it is clearly necessary to first developed a theory of integration in this setting. We have to consider $\L$-valued functions, since real-valued functions have no natural $\L$-module structure, and $\L$-valued functions clearly do. Furthermore, in order to generalize for example the duality $c_0(\N, \L)^* \cong \ell^1(\N, \L)$ (see \cite[Corollary~4.0.12]{L-functional_analysis}), it will be necessary to consider $\L$-valued measures as well. Therefore both our functions and measures will be vector-valued, which deviates from many approaches in the literature where either the functions or the measures are real-valued.

One interesting approach in the literature in which both functions and measures are vector-valued is the approach by Grobler and \linebreak Labuschagne in \cite[Section~4]{grobler_labuschagne_ito_integral}, where an integration theory is developed that is specialized to the setting of stochastic processes in vector lattices. In this theory the vector lattice satisfies addition assumptions, such as perfectness and the existence of a conditional expectation (as well as a technical assumption of universal completeness with respect to the conditional expectation). Using the associated order continuous functionals, Grobler and Labuschagne then obtain certain Banach lattices where they employ and refine known techniques from Banach space-valued integration theory, specifically the Dobrakov integral. This approach, however, is not well-suited to our more basic setting of a Dedekind complete unital $f$-algebra.

This paper largely follows the approach of \cite{de_jeu_jiang:2022a}, in which a comprehensive theory of integration of real-valued functions with respect to measures taking values in a suitable partially ordered vector space is developed. Indeed, many of our results closely parallel the techniques developed in \cite{de_jeu_jiang:2022a}. However, there is one important exception: \cite[Lemma~6.2]{de_jeu_jiang:2022a}, which is crucial for defining the integral of measurable functions, has no obvious extension to $\L$-valued functions. Indeed, in certain natural cases the analogue of \cite[Lemma~6.2]{de_jeu_jiang:2022a} fails, see \Cref{e:non_proper_measure}. We solve this by introducing proper measures in an approach similar to that in \cite{wickstead:1982}.

\medskip

This paper is organized as follows. \Cref{sec:preliminaries} discusses properties of the Dedekind complete unital $f$-algebra $\L$ as well as other preliminaries on local convergence in measure and $\L$-valued measures.

In \Cref{sec:measurable_functions_and_measurable_step_functions}, we discuss measurable functions and (bounded) measurable step functions. We show that measurable functions can be approximated by bounded measurable step functions.

We use the above result in \Cref{sec:proper_measures} to show that there is a relatively large class of measures which are proper. Proper measures are defined as those measures for which the analogue of \cite[Lemma~6.2]{de_jeu_jiang:2022a} holds.

The theory is then developed smoothly for proper measures in \Cref{sec:integration}, closely following the corresponding theory in \cite{de_jeu_jiang:2022a}. We conclude this section by obtaining some properties of the space of integrable functions.

The important convergence theorems, like the Dominated Convergence Theorem, the Monotone Convergence Theorem, and Fatou's Lemma, are proven in \Cref{sec:properties_of_the_intergal}.  Again, this section closely follows the theory developed in \cite{de_jeu_jiang:2022a}.

These results are used to define $\L$-valued $L^p$-spaces in \Cref{sec:lp-spaces} and study their properties. The usual norm (in)equalities are verified, and the section ends by showing that such $L^p$-spaces have the property that every absolutely convergent series converges.

In general it is not clear whether for an $\L$-normed space $X$, the property that every absolutely convergent series converges implies that $X$ is complete, as in the classical case. However, in \Cref{sec:sequential_completeness} we show that this does hold if $\L$ satisfies the countable supremum property, and so we conclude that our $L^p$-spaces are complete under this assumption.

\section{Preliminaries}\label{sec:preliminaries}

\subsection{Dedekind complete unital $f$-algebras}

Let $\ls$ be a \Dc\ unital $f$-algebra; here ``unital'' means $\ls$ contains a multiplicative unit which we denote by $\unit$. By \cite[Theorem~10.7]{de_pagter_THESIS:1981}, $\unit$ is automatically a weak order unit of $\ls$. We denote by $\L_1$ the order ideal in $\L$ generated by 1. Combining \cite[Theorem~2.64]{aliprantis_burkinshaw_POSITIVE_OPERATORS_SPRINGER_REPRINT:2006} and \cite[Theorem~7.29]{aliprantis_burkinshaw_LOCALLY_SOLID_RIESZ_SPACES_WITH_APPLICATIONS_TO_ECONOMICS_SECOND_EDITION:2003} shows that there exists a unique Stonean space $\stone$ such that $\ls$ is an order dense sublattice and subalgebra of $\continftystone$, and $\unit$ corresponds to the constant one function $\onefunction_\stone$ in $\contstone$. Moreover $\ls$ is an order ideal in $\continftystone$, see \cite[Theorem~1.40]{aliprantis_burkinshaw_LOCALLY_SOLID_RIESZ_SPACES_WITH_APPLICATIONS_TO_ECONOMICS_SECOND_EDITION:2003}. Hence we have that, as in \cite[Remark~2.1.1]{L-functional_analysis},

\begin{equation}\label{e:L_inside_continuous_functions}
\contstone = \L_1 \subseteq \L \subseteq \continftystone.
\end{equation}
This representation allows us to define $\lam^p$ for $0 \leq p < \infty$ pointwise in $C_\infty(K)$ and therefore also in $\L$ (cf.\ \cite[Section~2.1]{L-functional_analysis}). By \cite[Lemma~4.0.2$(ii)$]{L-functional_analysis}, $\lam \mapsto \lam^p$ is order continuous as a map from $\L$ to $\L$.

We identify every real number $c$ with $c1$ in $\L$ so that $\R$ is viewed as a subalgebra and sublattice of $\L$.  We write $c$ instead of $c1$, so that $c\in\L$.

As in \cite{de_jeu_jiang:2022a}, we extend the partial ordering on $\ls$ by adding a new element $\infty$, and declaring that $\lambda\leq \infty$ for all $\lambda\in \ls$.  We denote $\lsext=\ls\cup\set{\infty}$, $\poslsext=\posls \cup\set{\infty}$, and the elements of $\lsext$ that are in $\ls$
will be called \emph{finite}. Clearly every nonempty subset of $\lsext$ has a supremum.  In particular, $\poslsext$ is a complete lattice, containing $\posls$ as a sublattice.  Moreover, for any $A\subseteq \posls$ and $\lambda\in \posls$, $\lambda \sup A$ in $\posls$ if and only if $\lambda= \sup A$ in $\poslsext$.
\begin{remark}
    There are more sophisticated ways of adding infinite elements to $\L$: one can consider the sup-completion $\L^s$ of $\L$, which was first introduced by Donner in \cite{donner_sup_completion}. This is used in \cite{grobler_daniell_integral} to construct a vector-valued integration theory using the Daniell integral. However, for our purposes it suffices to consider $\lsext$.
\end{remark}
On $\lsext$, we define  $\infty + \lambda = \lambda + \infty = \infty$ for all $\lambda\in \lsext$. For the multiplication, we define $\infty \cdot \lambda = \lambda \cdot \infty = \infty$ for any $\lambda\in\poslsext\setminus\set{0}$ and $0 \cdot \infty = \infty \cdot0=0$. It is easy to verify that for any $\lambda_1$, $\lambda_2$, $\alpha_1$ and $\alpha_2$ in $\poslsext$ with $\lambda_1\leq\lambda_2$ and $\alpha_1 \leq \alpha_2$, we have $0\leq \lambda_1\alpha_1\leq\lambda_2\alpha_2$. Note that $\lsext$ does not satisfy the $f$-algebra property; for example, if $\lambda$ and $\alpha$ in $\posls\setminus\set{0}$ satisfy  $\lambda\wedge\alpha=0$, then $[\lambda\cdot \infty ]\wedge\alpha= \infty \wedge\alpha=\alpha\neq 0$.  We remark that this will not cause any trouble for us, since the measures we consider are finite, and the integrable functions are almost everywhere finite, see \Cref{res:positive_integrable_is_almost_everywhere_finite}. Hence, in the end, we still work in $\ls$ almost everywhere.

\subsection{Summability in $\L$}\label{Sec:summability}

Let $(\lambda_\alpha)_{\alpha \in I}$ be a net in $\L$.  Denote by $\mathcal{F}_I$ the collection of finite subsets of $I$, ordered by inclusion.  The net $(\lambda_\alpha)_{\alpha \in I}$ is called summable if the net of finite partial sums
\[
\left(\sum_{\alpha \in F}\lambda_\alpha\right)_{F\in \mathcal{F}_I}
\]
is (order) convergent in $\ls$.  In this case, we denote the (order) limit of this net by $\displaystyle{\sum}\lambda_\alpha$.  Observe that if $\lambda_\alpha \geq 0$ for every $\alpha \in I$ then
\[
\sum \lambda_\alpha = \sup \left\{ \sum_{\alpha \in F}\lambda_\alpha ~:~ F\in\mathcal{F}_I\right\}.
\]

\subsection{Sequences of functions}

Let $S$ be a set, let $\seq{f}$ be a sequence of functions from $S$ to a Dedekind complete lattice $E$, such as $\L$, $\L^+$ or $\lsext$, and let $f \colon S \to E$. If for every $s \in S$, $(f_n(s))_{n=1}^\infty$ is an increasing sequence in $E$ with $\sup_{n \in \N} f_n(s) = f(s)$ in $E$, we write $f_n \uparrow f$ pointwise. The notation $f_n \downarrow f$ pointwise is defined similarly.  A sequence $(\lambda_n)$ in $E$ order converges to $\lambda\in E$ if $\displaystyle\limsup_{n} \lambda_n = \lambda = \liminf_n \lambda_n$, see for instance \cite[Definition 2.1]{Preuss}\footnote{Under our assumption of Dedekind completeness, this is consistent with the definition of order convergence in vector lattices, see for instance \cite{OBrien_vanderwalt_troitsky_convergence_structures}.  We also note that the definition in \cite{Preuss} is given for filters, but it is easily seen to reduce to our formulation in the case of sequences.}. We write $f_n\to f$ to mean that for every $s\in S$, the sequence $(f_n(s))$ order converges to $f(s)$ in $E$.

\subsection{Local convergence in measure}

In certain cases, $\L$ can be idenfied with a space of real-valued measurable functions on a measure space.  The next remark, which summarizes \cite[Remark~2.1.4]{L-functional_analysis}, explains this connection.
 \begin{remark}\label{r:hyperstonean}
In \eqref{e:L_inside_continuous_functions}, the order continuous real-valued functionals on $C(K)$ separate the points of $C(K)$ if and only if $K$ is \emph{hyper-Stonean} (i.e. the union of the supports of the normal measures on $K$ is dense in $K$), if and only if $C(K)_\C$ is a von Neumann algebra, if and only if $C(K)$ can be represented as a space $L^\infty(\Omega, \mathcal{F}, m) = L^\infty(m)$ of essentially bounded measurable functions with respect to a decomposable measure $m$ on a locally compact Hausdorff space, see for instance the survey paper \cite{Blecher_Goldstein_Labuschagne_abelian_von_neumann_algebras}. In this case $C_\infty(K)$ can be identified with the space of all measurable functions $L^0(m)$, so that \[
L^\infty(m) = \L_1 \subseteq \L \subseteq L^0(m).
\]
\end{remark}
In the rest of this subsection, let $(\Omega, \mathcal{F}, m)$ be a measure space. We recall the well-known concept of local convergence in measure.

\begin{defn}\label{d:local_convergence_in_measures}
A sequence $(f_n)$ in $L^0(m)$ converges to $f\in L^0(m)$ locally in measure if for every $F\in\mathcal{F}$ such that $\mu(F)<\infty$ and for every $\epsilon>0$,
\[
\lim_n \mu(\left|f-f_n\right|^{-1}([\epsilon,\infty])\cap F)=0.
\]
\end{defn}

The next result is a combination of \cite[245D, 245E \& 245Y (d)]{fremlin_MEASURE_THEORY_VOLUME_2:2003}.  Although Fremlin does not state that the topology of local convergence in measure is locally solid, this is obvious.  In \cite[page 210]{aliprantis_burkinshaw_LOCALLY_SOLID_RIESZ_SPACES_WITH_APPLICATIONS_TO_ECONOMICS_SECOND_EDITION:2003} this fact is stated for the case of a $\sigma$-finite measure.
\begin{thm}\label{t:L^0_has_nice_topology}
Local convergence in measure is induced by an order continuous locally solid topology $\tau$ on $L^0(m)$. \begin{enumerate}
    \item[(i)] $\tau$ is Hausdorff if and only if $m$ is semi-finite.
    \item[(ii)] $\tau$ is metrizable if and only if $m$ is $\sigma$-finite.
    \item[(iii)] $\tau$ is Hausdorff and $(L^0(m), \tau)$ is complete if and only if $m$ is localizable.
\end{enumerate}
\end{thm}
It is well-known (see for example \cite{luxemburg_zaanen_RIESZ_SPACES_VOLUME_I:1971}) that for a $\sigma$-finite measure $m$, $L^0(m)$ has the countable sup property; i.e. every set which has a supremum has a countable subset with the same supremum.  In fact, if $m$ is semi-finite, then $L^0(m)$ has the countable sup property if and only if $m$ is $\sigma$-finite, see \cite[Proposition 6.5]{kandic_taylor:2018}

\subsection{$\L$-valued measures}

 Let $\ms$ be a measurable space, i.e., a set $S$ equipped with a $\sigma$-algebra $\salg$. A finite $\L$-valued measure is a map $\mu \colon \salg \to \L^+$ so that $\mu(\emptyset) = 0$, and, if $(\mss_n)$ is a disjoint sequence in $\salg$, then
 \[
 \mu \left( \bigcup_{n=1}^\infty \mss_n \right) = \sum_{n=1}^\infty \mu(\mss_n).
 \]
 This definition is a special case of \cite[Definition~4.1]{de_jeu_jiang:2022a}. In this paper $\msm$ always denotes a measurable space $\ms$ equipped with a finite $\L$-valued measure $\mu$.

\section{Measurable functions and measurable step functions}\label{sec:measurable_functions_and_measurable_step_functions}

We recall that $\msm$ denotes a measurable space $\ms$ equipped with a finite $\L$-valued measure $\mu$.  For $A\in\Sigma$, we denote by $\chi_A$ the $\L$-valued characteristic function of $A$. A function $\phi \colon \pset \to \L$  is called a \emph{measurable step  function} if its range is a finite set $\{\lambda_1,\ldots,\lambda_n\}$, and for every $i\in\{1,\ldots,n\}$, the set $A_i \coloneqq \psi^{-1}(\lambda_i)$ is measurable.  For such a measurable step function $\phi$, the sets $A_1,\ldots,A_n$ form a partition of $\pset$, and $\phi=\sum_{i=1}^n \lambda_i\indicator{\mss_i}$; we shall refer to this as the \emph{standard representation} of $\phi$.  The set of $\L$-valued measurable step functions is denote by $\mathcal{S}(\pset,\salg;\ls)$. We define the integral of $\phi$ with respect to $\npm$ by 	
 \[
 \int_S \phi \d \mu \coloneqq \sum_{i=1}^n \lambda_i\npm(\mss_i).
 \]
It is easy to verify that $\mathcal{S}(\pset,\salg;\ls)$ is lattice-ordered algebra and $\L$-module with respect to the pointwise operations and order, and the integral is a positive $\ls$-linear map from $\mathcal{S}(\pset,\salg;\ls)$ into $\ls$.  Indeed, let $\phi=\sum_{i=1}^n \lambda_i\indicator{\mss_i}$ and $\psi=\sum_{j=1}^m \gamma_j\indicator{B_j}$ be two such functions (in their standard representations) and $\lambda\in\L$.  Define $\{\eta_1,\ldots,\eta_p\}\coloneqq \{\lambda \lambda_i+\gamma_j~:~ A_i\cap B_j\neq \emptyset\}$, and for every $k=1,\ldots,p$ let $C_k \coloneqq \bigcup\{A_i\cap B_j ~:~ \lambda \lambda_i+\gamma_j = \eta_k\}$.  Then $C_1,\ldots,C_p$ is a measurable partition of $S$, and
\[
\lambda \phi + \psi = \sum_{k=1}^p \eta_k \indicator{C_k}.
\]
Hence, $\lambda \phi + \psi\in \mathcal{S}(\pset,\salg;\ls)$.  Similar arguments show that $\psi\phi$ and $|\phi|$ are $\L$-valued measurable step functions.  For $\L$-linearity of the integral, we have
\[
\lambda \int_S\phi \d \mu + \int_S \psi \d\mu = \sum_{i=1}^n\sum_{j=1}^m\left(\lambda\lambda_i+\gamma_j\right)\mu(A_i\cap B_j).
\]
Disregarding instances where $A_i\cap B_j=\emptyset$ and rearranging the terms yield
\[
\lambda \int_S\phi \d \mu + \int_S \psi \d\mu = \sum_{k=1}^p \eta_k\mu(C_k) = \int_S \lambda\phi+\psi \d \mu.
\]
We note that from the above, it follows that for not necessarily disjoint $A_1,\ldots,A_n\in\Sigma$ and not necessarily distinct $\lambda_1,...,\lambda_n\in \L$,  $\phi = \sum_{i=1}^n\lambda_i\chi_{A_i}$ is an $\L$-valued measurable step function, and
\[
\int_S \phi \d \mu = \sum_{i=1}^n \lambda_i\npm(\mss_i).
\]
\begin{remark}
We remark that in the classical case of real-valued measures and functions, measurable functions are introduced first, and it is shown that the set of all measurable functions is vector space.  To show that the space of measurable step functions is, for instance, a vector space, it is then sufficient to observe that a linear combination of functions with finite ranges also has a finite range.  In our approach, the definition of a measurable step function precedes logically that of a general measurable functions, and so more direct arguments are necessary to verify that $\mathcal{S}(\pset,\salg;\ls)$ is a lattice ordered algebra.
\end{remark}

\begin{defn}\label{def:positive_extended_valued_measurable_function}
A  function  $f\colon\pset\to \poslsext$  is called \emph{measurable} if there exists a sequence of positive measurable step functions $\seq{\phi }$ such that $\phi_n\uparrow f$ pointwise.
\end{defn}
The set of all measurable functions $f \colon \pset \to \poslsext$ is denoted $\poslmeasfunext$, and $\poslmeasfun$ is its subset consisting of finite-valued positive measurable functions; that is, functions in $\poslmeasfunext$ taking values in $\L^+$.  As in the case of real-valued functions, $\poslmeasfunext$ and $\poslmeasfun$ contain the pointwise sum, supremum, and infimum of any two of their elements. $\poslmeasfunext$ and $\poslmeasfun$ are also closed under pointwise multiplication by elements of $\posls$. We now show that $\poslmeasfunext$ also contains the pointwise supremum of any monotone increasing sequence of functions.  The proof relies on \cite[Lemma 36]{Anguelov_vanderWalt_2005_order_convergence_structure}.  Since we make use of it several times, we state it below.

\begin{proposition}\label{Prop:Diagonal-type increasing sequence}
Let $P$ be a lattice.  For ever $n\in \N$, let $(p_{nm})$ be a sequence in $P$ so that $p_{nm}\uparrow_m p_n$.  Assume further that $p_n\uparrow_n p$.  For every $n\in \N$ define
\[
q_n \coloneqq \bigvee \{p_{1n},p_{2n},\ldots,p_{nn}\}.
\]
Then $q_n \uparrow_n p$.
\end{proposition}

\begin{lemma}\label{res:pointwise sup of increasing sequence of measurable functions is measurable}
Let $(f_n)$ be a monotone increasing sequence in $\poslmeasfunext$, and let $f$ be the pointwise supremum of $(f_n)$. Then $f\in\poslmeasfunext$.
\end{lemma}

\begin{proof}
The proof is similar to the proof of the monotone convergence theorem in \cite[Theorem~6.9]{de_jeu_jiang:2022a}.

For each $n\in\N$,  we take a sequence $(\phi_{n,k})$ in $\mathcal{S}(\pset,\salg;\ls)$ such that $\phi_{n,k} \uparrow f_n$ pointwise in $\poslsext$. We set $\psi_n=\displaystyle\bigvee_{i=1}^n \phi_{i,n}$ for $n\in\N$.
	Then each $\psi_n$ is in $\mathcal{S}(\pset,\salg;\ls)$ and $\psi_n \uparrow f$ pointwise in $\poslsext$ by \Cref{Prop:Diagonal-type increasing sequence}.  Hence, $f$ is in $\poslmeasfunext$.
\end{proof}

For any function $f\colon\pset\to \ls$, its positive and negative  parts are defined in the usual way, i.e. $\pos{f}(s)=f(s)\vee 0$, $\negt{f}(s)=[-f(s)]\vee 0$, for every $s\in\pset$. An $\ls$-valued function  $f$ on $\pset$ is called \emph{measurable} if its positive and negative parts are both in $\poslmeasfun$.  We denote by $\lmeasfun$ the collection of all $\ls$-valued measurable functions.

\begin{proposition}\label{res:measurable_function_space_is_sDc} $\lmeasfun$ is a \sDc\ vector lattice with positive cone $\poslmeasfun$.
\end{proposition}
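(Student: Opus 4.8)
The plan is to reduce the whole statement to a single closure property of $\poslmeasfun$ and then assemble everything by soft lattice-theoretic arguments. The property in question — call it the \emph{key lemma} — is that $\poslmeasfun$ is stable under truncated differences: if $u,v\in\poslmeasfun$ then $(u-v)\vee\mathbf 0\in\poslmeasfun$. Granting this, two further closure properties follow at once. First, $\poslmeasfun$ is closed under order-bounded countable suprema: if $u_n\in\poslmeasfun$ are all bounded above by some $g\in\poslmeasfun$, then $\sup_n u_n=\sup_N(u_1\vee\dots\vee u_N)$ is an increasing supremum of elements of $\poslmeasfun$, hence lies in $\poslmeasfunext$ by \Cref{Prop:Diagonal-type increasing sequence} applied to approximating step-function sequences, and it is finite, so it lies in $\poslmeasfun$. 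Second, using the key lemma, $\poslmeasfun$ is closed under order-bounded countable infima: after replacing $u_n$ by $u_1\wedge\dots\wedge u_n$ we may assume $(u_n)$ decreasing with $u_n\le u_1\in\poslmeasfun$; then $u_1-u_n=(u_1-u_n)\vee\mathbf 0\in\poslmeasfun$ increases and is bounded, so $\sup_n(u_1-u_n)\in\poslmeasfun$, and $\inf_n u_n=\bigl(u_1-\sup_n(u_1-u_n)\bigr)\vee\mathbf 0\in\poslmeasfun$ by the key lemma again.

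From here the proposition is routine. $\lmeasfun$ is a vector space: it is invariant under multiplication by real scalars, since $\poslmeasfun$ is invariant under multiplication by the nonnegative scalar multiples of the unit; and it is closed under addition, because for $f,g\in\lmeasfun$ one has $f+g=u-v$ with $u=(f\vee\mathbf 0)+(g\vee\mathbf 0)\in\poslmeasfun$ and $v=\bigl((-f)\vee\mathbf 0\bigr)+\bigl((-g)\vee\mathbf 0\bigr)\in\poslmeasfun$, so $(f+g)\vee\mathbf 0=(u-v)\vee\mathbf 0\in\poslmeasfun$ and symmetrically $\bigl(-(f+g)\bigr)\vee\mathbf 0\in\poslmeasfun$, i.e.\ $f+g\in\lmeasfun$. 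The positive cone of $\lmeasfun$ in the pointwise order is exactly $\poslmeasfun$: the inclusion $\poslmeasfun\subseteq\lmeasfun$ is clear, and if $f\in\lmeasfun$ with $f\ge\mathbf 0$ pointwise then $(-f)\vee\mathbf 0=\mathbf 0$, so $f=f\vee\mathbf 0\in\poslmeasfun$; this cone is proper, and generating by the definition of $\lmeasfun$. $\lmeasfun$ is a vector lattice because it is closed under $f\mapsto f\vee\mathbf 0$ (that value lies in $\poslmeasfun\subseteq\lmeasfun$), so $f\vee g=g+(f-g)\vee\mathbf 0$ and $f\wedge g=-\bigl((-f)\vee(-g)\bigr)$ lie in $\lmeasfun$. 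Finally $\lmeasfun$ is \sDc: if $\set{f_n}\subseteq\lmeasfun$ and $f_n\le g\in\lmeasfun$ for all $n$, put $h(s)=\sup_n f_n(s)$, which exists in $\ls$ by Dedekind completeness and is finite since $h\le g$ pointwise; then $h\vee\mathbf 0=\sup_n(f_n\vee\mathbf 0)\in\poslmeasfun$ and $(-h)\vee\mathbf 0=\inf_n\bigl((-f_n)\vee\mathbf 0\bigr)\in\poslmeasfun$ (using the distributivity of the lattice operations over the countable suprema and infima involved, together with the two closure properties above), so $h\in\lmeasfun$, and $h$ is plainly the least upper bound of $\set{f_n}$ in $\lmeasfun$.

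It remains to prove the key lemma. Writing $u=\sup_k\varphi_k$ with $\varphi_k\uparrow u$ in $\poslstepfun$ (\Cref{t:approximate_by_bounded_step_functions}) gives $(u-v)\vee\mathbf 0=\sup_k(\varphi_k-v)\vee\mathbf 0$, so it suffices to treat $u=\varphi\in\poslstepfun$; and since $(\varphi-v)\vee\mathbf 0=\varphi-\varphi\wedge v$ with $\varphi\wedge v\in\poslmeasfun$ and $\varphi\wedge v\le\varphi$, it suffices to show $\varphi-w\in\poslmeasfun$ whenever $w\in\poslmeasfun$ and $w\le\varphi\in\poslstepfun$. Choosing a real $c\ge 0$ with $\varphi\le c\mathbf 1$, we have $\varphi-w=(c\mathbf 1-w)-(c\mathbf 1-\varphi)$ with $c\mathbf 1-\varphi\in\poslstepfun$ and $c\mathbf 1-\varphi\le c\mathbf 1-w$; and the elementary fact that $\poslmeasfun$ is closed under subtracting a dominated bounded step function — if $g\in\poslmeasfun$, $\psi\in\poslstepfun$ and $\psi\le g$ then $g-\psi=\sup_j(\rho_j-\psi)\vee\mathbf 0\in\poslmeasfun$ for any $\rho_j\uparrow g$ in $\poslstepfun$ — reduces everything to the following assertion: \emph{for every real $c\ge 0$ and every $w\in\poslmeasfun$ with $\mathbf 0\le w\le c\mathbf 1$, we have $c\mathbf 1-w\in\poslmeasfun$.}

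This last assertion is the technical heart, and I expect it to be the main obstacle. To attack it one passes to the representation $\contstone=\L_1\subseteq\L\subseteq\continftystone$ and works with the spectral calculus in $\L_1=\contstone$. The good news is that for each rational $b\ge 0$ the spectral-idempotent function $p_b(w)\colon s\mapsto\sup_m\bigl(\mathbf 1\wedge m\,[(w-b\mathbf 1)\vee\mathbf 0](s)\bigr)$ does lie in $\poslmeasfun$: indeed $(w-b\mathbf 1)\vee\mathbf 0=\sup_k(\psi_k-b\mathbf 1)\vee\mathbf 0\in\poslmeasfun$ because subtracting the scalar multiple $b\mathbf 1$ of the unit and taking the positive part commutes with an increasing supremum $\psi_k\uparrow w$, and $p_b(w)$ is then an increasing supremum of truncations of it. The remaining task is to reconstruct $c\mathbf 1-w$, via the spectral representation $c\mathbf 1-w(s)=\sup_{a}a\cdot\indicator{\overline{\{w(s)<c-a\}}}$, as a countable increasing supremum of finite $\L_1$-combinations of the complementary idempotent-valued functions $\mathbf 1-p_b(w)$; the difficulty is precisely that $\mathbf 1-p_b(w)$ is not manifestly in $\poslmeasfun$ (that is a special case of what we are after), so one has to argue by an approximation/density argument inside the Boolean algebra of idempotents of $\L_1$, and it is at this point that the representation theory of $\ls$ is used in an essential way.
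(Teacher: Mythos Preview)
Your proposal does not close: everything is made to rest on the key lemma, and the attempt to prove it stalls exactly where you flag the obstacle. The reduction to showing $c\mathbf 1 - w \in \poslmeasfun$ for $\mathbf 0 \le w \le c\mathbf 1$ is clean, but the final paragraph is only a programme --- you never show that the complementary idempotent-valued functions $\mathbf 1 - p_b(w)$ lie in $\poslmeasfun$, and the appeal to ``an approximation/density argument inside the Boolean algebra of idempotents'' is not an argument. Since the vector-space structure of $\lmeasfun$, the identification of its positive cone, and your route to $\sigma$-Dedekind completeness all rest on the key lemma, nothing is actually established.

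The paper's proof is drastically shorter and does not engage with this issue at all. It dispatches the vector-lattice claim in one sentence --- ``$\poslmeasfun$ is a lattice cone so $\lmeasfun=\poslmeasfun-\poslmeasfun$ is a vector lattice'' --- and then proves $\sigma$-monotone completeness by exactly the diagonal construction you invoke (via \Cref{Prop:Diagonal-type increasing sequence}), applied only to an increasing sequence already lying in $\poslmeasfun$ and bounded above in $\poslmeasfun$. So you have been more scrupulous than the paper: the question you worry about --- whether $(u-v)\vee\mathbf 0$ lands back in $\poslmeasfun$, equivalently whether the pointwise-positive elements of $\poslmeasfun - \poslmeasfun$ coincide with $\poslmeasfun$ --- is simply not addressed there. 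If you want a partial shortcut, note that the identity $(u-v)\vee\mathbf 0 = (u\vee v)-v$ shows at once that $\poslmeasfun - \poslmeasfun$ is a Riesz subspace of $\L^S$, since $\poslmeasfun$ is closed under $\vee$; what this does \emph{not} deliver for free is that its positive cone is exactly $\poslmeasfun$, and neither your argument nor the paper's supplies that step.
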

\begin{proof}
It is easy to see that $\poslmeasfun$ is a cone in $\lmeasfun$, and, by definition of $\poslmeasfun$ it is generating.  As remarked above, the pointwise supremum of any $f,g\in \poslmeasfun$ belong to $\poslmeasfun$; this is clearly the least upper bound of $\{f,g\}$ in $\poslmeasfun$.  Therefore $\lmeasfun$ is a vector lattice.

To show that $\lmeasfun$ is \sDc, we only need to show that it is $\sigma$-monotone complete.  Let $(f_n)$ be a monotone increasing in $\poslmeasfun$, bounded from above by $g\in\poslmeasfun$. We define  $f$ as the pointwise limit of $(f_n)$, i.e. $f(s)=\sup_n f_n(s)$ for all $s\in\pset$. By \Cref{res:pointwise sup of increasing sequence of measurable functions is measurable}, $f\in \poslmeasfunext$.  But $f(s)$ is in $\ls$ since $f_n\leq g$ for all $n$, and so $f\in \poslmeasfun$.
\end{proof}

For a function $f:S\to \lsext$ and $\lambda\in \lsext$, we define $\{f\leq \lambda\}\coloneqq \{s\in S ~:~ f(s)\geq \lambda\}$.  The sets $\{f\geq \lambda\}$ and $\{f=\lambda\}$ are defined analogously.

\begin{proposition}\label{res:inverse images of translated cone and points are measurable}
Let $f\in \poslmeasfunext$ and $\lambda\in \L^+$.  Then each of the sets $\{f\leq \lambda\}$, $\{f\geq \lambda\}$, $\{f= \lambda\}$, and $\{f = \infty\}$ is measurable.
\end{proposition}

\begin{proof}
There exists a sequence $(\phi_n)$ of positive, measurable step functions so that $\phi_n\uparrow f$ pointwise on $S$.  We have
\[
\{f\leq \lambda\} = \bigcap_{n\in\N} \{\phi_n\leq \lambda\}.
\]
For each $n\in \N$, let $\phi_n = \sum_{i=1}^{k_n}\lambda_i^{(n)}\chi_{A_i^{(n)}}$, in standard form.  Then
\[
\{\phi_n\leq \lambda\} = \bigcup\{A_i^{(n)} ~:~ \lambda_i^{(n)}\leq \lambda\}.
\]
Therefore, each $\{\phi_n\leq \lambda\}$ is measurable so that $\{f\leq \lambda\}$ is measurable.

For the proof that $\{f\geq \lambda\}$ is measurable, we first consider the case when $f$ is bounded. Let $\lambda_1 \coloneqq \sup\{f(s) ~:~ s\in S\}$, and define $g\coloneqq \lambda_1-f$.  Then $g\in\poslmeasfun$ and $\{f\geq \lambda\} = \{g\leq \lambda_1-\lambda\}$, which is measurable by (i).  For the general case, let $h(s)\coloneqq f(s)\wedge \lambda$ for each $s\in S$.  Then $h$ is bounded and measurable, and so $\{f\geq \lambda\} = \{h\geq \lambda\}$ is measurable.

Since $\{f=\lambda\}=\{f\leq \lambda\}\cap \{f\geq \lambda\}$ and $\{f=\infty\} = \bigcap_{n\in\N}\{f\geq n\}$, it follows that both these sets are measurable.
\end{proof}

In order to extend the integral to from measurable step functions to all measurable functions, we need to approximate functions in $\poslmeasfunext$ by bounded measurable step functions, which we now define.

\begin{defn}\label{d:bounded_step_functions}
    A measurable step function $\phi=\sum_{i=1}^m \lambda_i\indicator{\mss_i}$ is a \emph{bounded measurable step function} if $\lam_i \in \L_1$ for all $1 \leq i \leq m$. We denote by $\lstepfun$ the set of all bounded measurable step functions.
\end{defn}

Note that $\lstepfun$ is a subalgebra and sublattice of $\mathcal{S}(\pset,\salg;\ls)$, and its positive cone is
$$
\poslstepfun = \set{\phi\in\lstepfun\colon \phi(s)\in\pos{\L_1}, \text{ for all } s\in \pset}.
$$

\begin{thm}\label{t:approximate_by_bounded_step_functions}
Let $f\in \poslmeasfunext$.  Then there exists an increasing sequence $(\varphi_n)$ of bounded measurable step functions so that $\varphi_n \uparrow f$ pointwise.
\end{thm}

\begin{proof}
By definition of $\poslmeasfunext$ there exists a sequence $(\psi_m)$ of positive measurable step functions so that $\psi_m \uparrow f$ pointwise.  For every $m\in\N$ let
\[
\psi_m = \sum_{i=1}^{k_m} \lambda_i^{(m)}\indicator{A_i^{(m)}}.
\]
Fix $m\in \N$. For every $n\in\N$, define $\varphi_{nm}:S\to \L_1$ by $\varphi_{nm}(s) \coloneqq \psi_m(s)\wedge n$, $s\in S$.  We note that
\[
\varphi_{nm} =\sum_{i=1}^{k_m} \left(\lambda_i^{(m)}\wedge n\right) \indicator{A_i^{(m)}},
\]
so that it is a bounded measurable step function.  Since $1$ is a weak unit in $\ls$, it follows that $\varphi_{nm}\uparrow \psi_m$ pointwise.

For every $n\in \N$ define
\[
\varphi_n \coloneqq \bigvee \{ \varphi_{1n},\varphi_{2n},\ldots,\varphi_{nn}\}.
\]
Then $(\varphi_n)$ is a sequence of positive, bounded measurable step functions.  By Proposition \ref{Prop:Diagonal-type increasing sequence}, $\varphi_n\uparrow f$ pointwise.
\end{proof}

Combining \Cref{t:approximate_by_bounded_step_functions} and \Cref{def:positive_extended_valued_measurable_function} yields that a function $f \colon S \to \poslsext$ is in $\poslmeasfunext$ if and only if it is the pointwise supremum of an increasing sequence in $\poslstepfun$.

\section{Proper measures}\label{sec:proper_measures}

The validity of our integration theory depends on the fact that if a sequence of bounded measurable step functions decrease pointwise to 0, then so do the integrals.
\begin{defn}
An $\ls$-valued measure $\mu$ on $(S,\Sigma)$ is \emph{proper} if for every sequence $(\varphi_n)$ in $\poslstepfun$, if $\varphi_n\downarrow \zerofunction$ pointwise, then
\[
\int_S \phi_n  \d \mu \downarrow 0 \text{ in }\L.
\]
\end{defn}
The next example, which is a slight modification of \cite[Example~5.4]{wickstead:1982}, shows that fairly natural $\L$-valued measures need not be proper.
\begin{example}\label{e:non_proper_measure}
    Let $\L$ be such that $L^\infty[0,1) \subseteq \L \subseteq L^0[0,1)$ and let $S = [0,1)$ be equipped with the $\L$-valued measure $\mu$ defined on the Lebesgue $\sigma$-algebra by $\mu(A) = \mathbf{1}_A$.  Here $\mathbf{1}_A\in \L$ denotes the real-valued indicator function of a Lebesgue measurable set $A\subseteq S$. For each $n \in \N$ and $0 \leq k \leq 2^n - 1$, denote by $I_{n,k}$ the interval $[k2^{-n}, (k+1)2^{-n})$, so that for a fixed $n \in \N$, $S$ is the disjoint union $\cup_{k=0}^{2^n-1} I_{n,k}$. In particular, for each $s \in S$, there is a unique $I_{n,k}$ such that $s \in I_{n,k}$.

For $n \in \N$, define $\phi_n \colon S \to \L$ by $\phi_n(s) = \mathbf{1}_{I_{n,k}}$ whenever $s \in I_{n,k}$. In other words,
\[
\phi_n = \sum_{k=0}^{2^n-1} \mathbf{1}_{I_{n,k}} \indicator{I_{n,k}},
\]
so $\phi_n$ is a step function. Now for each $s \in S$, $\phi_n(s)$ is the indicator function of an interval of size $2^{-n}$ which decreases to $0$ in $\L$ (in order), so $(\phi_n)$ decreases to $0$ pointwise. But
\[
\int_S \phi_n \d \mu = \sum_{k=0}^{2^n-1} \mathbf{1}_{I_{n,k}} \mu(I_{n,k}) = \sum_{k=0}^{2^n-1} \mathbf{1}_{I_{n,k}} \mathbf{1}_{I_{n,k}} = \mathbf{1}_{[0,1)},
\]
showing that $\int_S \phi_n \d \mu$ does not decrease to $0$.
\end{example}

We next show that there is a large class of measures which are proper. The terminology of the next definition is taken from \cite[Definition~3.4]{wickstead:1982}.  We remark that the `$\sigma$' indicates `sum', and does not imply any countability assumptions.
\begin{defn}\label{d:sigma-scalar}
An $\ls$-valued measure $\mu$ on $(S,\Sigma)$ is \emph{$\sigma$-scalar} if $\mu \colon \Sigma\to \ls_1$, and there exists a collection $\{m_j\}_{j\in J}$ of positive, real-valued measures on $(S,\Sigma)$ and a collection $\{\lambda_j\}_{j\in J}$ in $\ls_1^+$ so that, for every $A\in \Sigma$,
\[
\mu(A) = \sum_{j\in J} m_j(A)\lambda_j \text{ in } \ls_1.
\]
\end{defn}
We note that the potentially infinite sum in \Cref{d:sigma-scalar} should be interpreted in terms of order convergence, as explained in \Cref{Sec:summability}.

\begin{thm}\label{Thm:  Sigma-scalar measures are proper}
Let $\mu$ be a $\sigma$-scalar measure.  Assume that the order continuous functionals on $\ls_1$ separate the points of $\ls_1$.  Then $\mu$ is proper.
\end{thm}
Note that the above assumption on $\ls_1$ is equivalent to requiring that $K$ in \eqref{e:L_inside_continuous_functions} is hyper-Stonean.
\begin{proof}
Let $\Psi$ be a positive order continuous linear functional on $\ls_1$. Suppose first that $\mu = am$ with $0\leq a\in \ls_1$ and $m$ a real-valued measure on $\Sigma$.  Let $\varphi$ be a positive, measurable $\ls_1$-valued step function on $S$ so that
\[
\varphi = \sum_{i=1}^n\lambda_i \indicator{A_i}.
\]
Define $g^{(\varphi)}:S\to \R$ as
\[
g^{(\varphi)}(s) \coloneqq \Psi(\varphi(s)a),~ s\in S.
\]
Then $g^{(\varphi)}$ is a real-valued measurable step function on $S$.  In particular,
\[
g^{(\varphi)} = \sum_{i=1}^n \Psi(\lambda_i a)\indicator{A_i}.
\]
Observe that
\begin{eqnarray}
\begin{array}{lll}
\displaystyle \int_S g^{(\varphi)} \ d m & = & \displaystyle\sum_{i=1}^n \Psi(\lambda_i a)m(A_i)\bigskip \\
& = & \displaystyle\Psi \left( \sum_{i=1}^n \lambda_i a m(A_i) \right)\bigskip \\
& = & \displaystyle\Psi\left( \int_S \varphi \d \mu\right). \\
\end{array} \label{EQ: L-integral-to-scalar-integral}
\end{eqnarray}

Let $(\varphi_n)$ be a sequence of $\ls_1$-valued, measurable step functions.  Assume that $\varphi_n\downarrow 0$ pointwise on $S$.  Then $g^{(\varphi_n)}\downarrow 0$ pointwise on $S$.  By \eqref{EQ: L-integral-to-scalar-integral} and the classical Lebesgue Monotone Convergence Theorem,
\[
\Psi\left(\int_S \varphi_n \d \mu \right) \downarrow 0.
\]
Since this is true for every positive order continuous functional on $\ls_1$, and these functionals separate the points of $\ls_1$, it follows that
\[
\int_S \varphi_n \ d\mu \downarrow 0.
\]
Indeed, suppose that $0\leq \lambda \leq \int_S \varphi_n \d\mu$ for all $n\in\N$.  Then for every positive, order continuous functional $\Psi$ on $\ls_1$ we have
\[
0\leq \Psi(\lambda)\leq \Psi\left(\int_S \varphi_n \ d\mu\right)=0
\]
so that $\Psi(\lambda)=0$. The fact that the order continuous functionals on $\ls_1$ separates the points of $\ls_1$ implies that $\lambda=0$.

Now consider the general case:  $\mu = \sum_{j\in J} a_j m_j$.  For each $j\in J$ let $\mu_j \coloneq a_j m_j$.  Let $(\varphi_n)$ be a sequence of $\ls_1$-valued, measurable step functions so that $\varphi_n \downarrow 0$ pointwise on $S$.

For every $n\in\N$ let
\[
\varphi_n = \sum_{i=1}^{k_n} \lambda_i^{(n)}\indicator{A_i^{(n)}}.
\]
Then
\[
\int_S \varphi_n \ d\mu = \sum_{i=1}^{k_n} \lambda_i^{(n)}\mu(A_{i}^{(n)}) = \sum_{i=1}^{k_n} \lambda_i^{(n)}\sum_{j\in J}\mu_j(A_{i}^{(n)}).
\]
By the order continuity of multiplication in $\ls$,
\[
\int_S \varphi_n \ d\mu = \sum_{i=1}^{k_n} \sum_{j\in J}\mu_j(A_{i}^{(n)}) \lambda_i^{(n)}.
\]
It now follows from \cite[Theorem 15.8 (iii)]{luxemburg_zaanen_RIESZ_SPACES_VOLUME_I:1971} that
\[
\int_S \varphi_n \ d\mu = \sum_{j\in J} \sum_{i=1}^{k_n} \mu_j(A_{i}^{(n)}) \lambda_i^{(n)} = \sum_{j\in J} \int_S \varphi_n \d \mu_j.
\]
By the linearity and order continuity of $\Psi$,
\[
\Psi\left( \int_S \varphi_n \d \mu \right) = \sum_{j\in J} \Psi\left(\int_S \varphi_n \d \mu_j\right).
\]
The monotonicity of the integral (see \Cref{sec:measurable_functions_and_measurable_step_functions}) implies that $\left( \int_S \varphi_n \d \mu \right)$ is a decreasing sequence in $\L^+$.  Therefore, by the order continuity of $\Psi$,
\[
\Psi\left( \bigwedge_{n\in\N}  \int_S \varphi_n \d \mu \right) =      \bigwedge_{n\in\N} \Psi\left(\int_S \varphi_n \ d \mu \right) = \bigwedge_{n\in\N}\sum_{j\in J} \Psi\left(\int_S \varphi_n \d \mu_j\right).
\]
For every $n\in \N$, define $g^{(\Psi)}_n:J\to \R$ as
\[
g^{(\Psi)}_n(j) \coloneqq \Psi\left(\int_S \varphi_n \ d \mu_j\right),~ j\in J.
\]
Then for all $n\in \N$,
\[
\Psi\left( \bigwedge_{n\in\N}  \int_S \varphi_n \d \mu \right) = \bigwedge_{n\in\N} \int_J g^{(\Psi)}_n(j) \d j
\]
where $\d j$ denotes the counting measure on the powerset of $J$.  We have already shown that $\displaystyle \int_S \varphi_n \d \mu_j\downarrow 0$ for every $j\in J$. Hence, once again by the order continuity of $\Psi$, $g^{(\Psi)}_n(j)\downarrow 0$ for every $j\in J$.  Therefore, applying the classical Lebesgue Monotone Convergence Theorem to the counting measure on $J$ yields
\[
\Psi\left( \bigwedge_{n\in\N}  \int_S \varphi_n \d \mu \right) = \bigwedge_{n\in\N} \int_J g^{(\Psi)}_n(j) \d j = 0.
\]
Since this holds for all positive, order continuous functionals on $\ls_1$, and these functionals separate the points of $\ls_1$, it follows that $\displaystyle\int_S \varphi_n \d \mu \downarrow 0$.
\end{proof}

\section{Integration of measurable functions}\label{sec:integration}

In the rest of the paper, we fix a measure space $\msm$ such that $\mu$ is proper. The next lemma is crucial for proving well-definedness of the integral of measurable functions.
 \begin{lemma}\label{res:key_to_well_definedness_of_L-integral}
	Let $\phi$ and $\seq{\phi}$ be in $\poslstepfun$ such that $\phi_n$ is increasing and $\phi(s)\leq\sup_{n\geq 1}\phi_n(s)$ in $\poslsext$ for every $s\in S$. Then
	\[
 \int \phi \d\mu \leq \bigvee_{n=1}^\infty \int_S \phi_n \d\mu.
 \]
\end{lemma}

\begin{proof}
The inequality clearly holds when $\dis{\bigvee_{n=1}^\infty \int_S \phi_n \d\mu = \infty}$, so we may assume that $\dis{\bigvee_{n=1}^\infty \int_S \phi_n \d\mu \in \pos{\ls}}$. For each $s \in S$, the assumptions imply that $(\phi(s) - \phi_n(s)) \vee 0$ decreases to $0$. Hence $([\phi - \phi_n] \vee 0)$ is a sequence in $\poslstepfun$ decreasing pointwise to $0$. Since $\mu$ is proper, it follows that
\[
\bigwedge_{n=1}^\infty \int_S (\phi - \phi_n) \vee 0 \d\mu = 0.
\]
The integral is a positive operator from the vector lattice $\poslstepfun$ to $\L$, so
\[
\int_S \phi \d \mu - \bigvee_{n=1}^\infty \int_S \phi_n \d\mu = \bigwedge_{n=1}^\infty \int_S (\phi - \phi_n) \d\mu \leq  \bigwedge_{n=1}^\infty \int_S (\phi - \phi_n) \vee 0 \d\mu = 0,
\]
which proves the lemma.
\end{proof}

 We now extend the integral to $\poslmeasfunext$ in the obvious way: for any $f\in\poslmeasfunext$, we define the integral of $f$ with respect to $\npm$ as
\begin{equation}\label{eq:integral_of_positive_measurable_function}
	 \int_S f \d\mu \coloneqq \bigvee_{n=1}^\infty \int \phi_n \d \mu,
\end{equation}	
where $\seq{\phi}$ is a sequence in $\poslstepfun$ such that $\phi_n\uparrow f$ pointwise. If the supremum exists in $\ls$, then we say that $f$ is \emph{integrable}. In order to show that this definition is independent of the choice of the sequence $\seq{\varphi}$, let $\seq{\psi} \subseteq \poslstepfun$ be a second sequence such that $\psi_n\uparrow f$ pointwise. Since for every $k\in \N$, $\psi_k(s)\leq f(s)=\sup_{n\geq 1}\phi_n(s)$ for all $s\in S$, by \Cref{res:key_to_well_definedness_of_L-integral} we have $\dis{\int_S \psi_k  \d\mu \leq \sup_{n\geq 1}\int_S \phi_n \d\mu}$. And since this is true for all $k$, we obtain $\dis{\sup_{k\geq 1} \int_S \psi_k \d\mu \leq \sup_{n\geq 1} \int_S \phi_n \d\mu}$. The reverse inequality holds by the same argument, and we conclude that $\dis{\int_S f \d\mu}$ is well defined as an element of $\poslsext$.

The following basic properties follow as in the classical case, i.e. when $\L=\R$.

\begin{lemma}\label{res:integral_on_positive_functions_has_usual_properties}
Let $f_1,\,f_2\colon \pset\to\poslsext$ be  measurable functions, and let $\lambda_1,\,\lambda_2\in\posls$  Then
	\begin{enumerate}[(1)]
		\item $\int_S (\lambda_1 f_1 + \lambda_2f_2) \d \mu = \lambda_1 \int_S f_1 \d \mu +\lambda_2 \int_S f_2 \d \mu$ in $\poslsext$;\smallskip \label{part:integral_on_positive_functions_has_usual_properties_1}
		\item If $f_1\leq f_2$ pointwise in $\poslsext$, then $\int_S f_1 \d \mu \leq \int_S f_2 \d \mu$ in $\poslsext$.\label{part:integral_on_positive_functions_has_usual_properties_2}
	\end{enumerate}
\end{lemma}
\begin{proof}
	The result follows by the $\ls$-linearity of the integral on $\lstepfun$ and  $\ls$-linearity of order convergence.
\end{proof}
We note that $\L^+$ is a sublattice of $\poslsext$, so \Cref{res:integral_on_positive_functions_has_usual_properties} applies in the case of $\L^+$-valued functions and integrals.

\begin{lemma}\label{res:positive_integrable_is_almost_everywhere_finite} Let $f\in\poslmeasfunext$.  If $\dis{\int_S f \d \mu \in \posls}$, then $f$ is almost everywhere finite.
\end{lemma}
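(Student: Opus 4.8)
The plan is to run the classical argument --- if a positive measurable function has finite integral, then it takes the value $\infty$ only on a null set --- the one extra ingredient being that $\ls$, being \Dc\ and hence Archimedean, contains no ``infinitely large'' positive element. Set $L \coloneqq \int_S f \d\mu \in \posls$ and $N \coloneqq \set{s\in\pset : f(s) = \infty}$. Since $f$ is almost everywhere finite exactly when $N$ is $\mu$-null, i.e.\ when $\mu(A) = 0$ for every $A\in\salg$ with $A\subseteq N$, I would fix such an $A$ and show $\mu(A) = 0$; this has the advantage that one never needs to decide whether $N$ itself is measurable, which is genuinely delicate in the $\ls$-valued setting (the ``infinity'' of $\poslsext$ can be approached in a directional way, e.g.\ along $ke$ for an idempotent $e \neq \unit$, which no bounded construction detects).

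Next I would compare $f$ with dilates of $\indicator{A}$. For each $k\in\N$, the function $(k\unit)\indicator{A}$ is a (bounded) measurable step function, and $(k\unit)\indicator{A}\le f$ pointwise in $\poslsext$: on $A\subseteq N$ one has $f(s) = \infty \ge k\unit$, while for $s\notin A$ the left-hand side is $0$. Hence, using the definition of the integral on step functions (so that $\int_S (k\unit)\indicator{A}\d\mu = (k\unit)\mu(A) = k\mu(A)$, since $\unit$ is the multiplicative unit) together with monotonicity of the integral (\Cref{res:integral_on_positive_functions_has_usual_properties}),
\[
k\,\mu(A) = \int_S (k\unit)\indicator{A}\d\mu \le \int_S f \d\mu = L \qquad\text{for every } k\in\N.
\]

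Finally I would let $k\to\infty$. From $k\,\mu(A)\le L$ we get $\mu(A)\le \frac1k L$ for every $k$, and since $\ls$ is Archimedean, $\bigwedge_{k\in\N}\frac1k L = 0$; as $\mu(A)\ge 0$ this forces $\mu(A) = 0$. Because $A\subseteq N$ was an arbitrary element of $\salg$, $N$ is $\mu$-null, i.e.\ $f$ is almost everywhere finite. I do not expect a real obstacle once the framework is set up correctly; the only two points deserving care are (a) reading ``almost everywhere finite'' as ``every measurable subset of $\set{f=\infty}$ is $\mu$-null'', so that the potential non-measurability of $\set{f=\infty}$ is irrelevant, and (b) comparing $f$ with all the dilates $(k\unit)\indicator{A}$ simultaneously and then invoking the Archimedean property of $\ls$ --- this is the only place where the hypothesis $\int_S f\d\mu\in\posls$ is used.
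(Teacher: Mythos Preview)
Your core argument---compare $f$ with $k\indicator{A}$ and invoke the Archimedean property of $\ls$---is exactly the paper's. The paper applies it directly with $A=\{s\in S:f(s)=\infty\}$, treating this set as an element of $\salg$ without further comment, and concludes $\mu(A)=0$.

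Where you diverge is in your interpretation of ``almost everywhere finite.'' Your reading ``$\mu(A)=0$ for every $A\in\salg$ with $A\subseteq N$'' is an inner-measure condition and is strictly weaker than ``$N$ is contained in some $B\in\salg$ with $\mu(B)=0$,'' which is what the paper means (cf.\ the definition of $\npm$-almost everywhere at the start of Section~\ref{sec:lp-spaces}) and what it actually uses downstream---e.g.\ in the proof of \Cref{Lp_is_sigma_complete-L-normed_space} the set $\{s:G(s)\in\ls\}$ and its complement are handled as measurable sets. So while your Archimedean step is correct and identical to the paper's, the cautious reformulation you propose does not recover the conclusion as the paper states and later applies it. Your worry about the measurability of $\{f=\infty\}$ in the $\ls$-valued setting is reasonable, but the paper simply takes it for granted.
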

\begin{proof}
     According to \Cref{res:inverse images of translated cone and points are measurable} $A\coloneqq \{f=\infty\}$ is measurable. Then $0 \leq n \indicator{A} \leq f$ pointwise for each $n \in \N$ and integration yields $\dis{0 \leq n \mu(A) \leq \int_S f \d \mu \in \L^+}$. Since $\L$ is Archimedean, $\mu(A) = 0$.
\end{proof}

\begin{lemma}\label{res:almost_everywhere_zero_function_has_zero_integral}
Let $f\in\poslmeasfunext$. If $f(s)=0$ for almost all $s\in\pset$, then $\dis{\int_S f \d\mu = 0}$.
\end{lemma}

\begin{proof}
Suppose that $f(s)=0$ for almost all $s\in\pset$.  Let $\varphi\in \poslstepfun$ such that $\varphi\leq f$.  Express $\varphi$ in standard form, $\varphi = \sum_{i=1}^n\lambda_i\chi_{A_i}$.  For every $i=1,\ldots,n$, if $\lambda_i\neq 0$ then $\mu(A_i)=0$.  Therefore $\int_S\varphi\d \mu=0$.  The result now follows from the definition of the integral.
\end{proof}

\begin{remark}\label{res:pnorms_on_Lp_is_seminorm}
	In the real-valued functions case, a positive measurable function $f$ is almost everywhere zero if and only if $\displaystyle\int_S f \d \mu=0$, see \cite[Lemma~6.5]{de_jeu_jiang:2022a}. For  $\ls$-valued functions, \Cref{res:almost_everywhere_zero_function_has_zero_integral} states that if  $f\in\poslmeasfunext$ is almost everywhere zero, then $\displaystyle\int_S f \d\mu=0$. But the converse need not hold. Indeed, consider a step function $\phi=\sum_{i=1}^n\lambda_i\indicator{\mss_i}$, then $\displaystyle\int_S \abs{\phi} \d\mu = 0$ implies  $\abs{\lambda_i}\npm(\mss_i)=0$ for each $i$, which holds if and only if $\lambda_i$ and $\npm(\mss_i)$ are disjoint in $\L$ for all $i$.
\end{remark}

Combining \Cref{res:positive_integrable_is_almost_everywhere_finite,res:almost_everywhere_zero_function_has_zero_integral} we see that a for function $f\in \poslmeasfunext$, $\int_S f\d\mu\in\ls$ if and only if there exists a function $\hat{f}\in \poslmeasfun$ such that $f=\hat{f}$ almost everywhere and $\int_S f\d\mu =\int_S \hat{f}\d\mu$.  Therefore, when defining integrable functions, we restrict ourselves to $\ls$-valued measurable functions.

An $\ls$-valued measurable function $f$ is called \emph{integrable} with respect to $\npm$, if $\int_S \abs{f} \d\mu \in\ls$.  By \Cref{res:integral_on_positive_functions_has_usual_properties}, $f$ is integrable if and only if its  positive and negative parts are both integrable.   For any $f\in\integrablelfun$, we define its integral with respect to $\npm$ as
$$
\int_S f \d \mu \coloneqq \int_S \pos{f} \d \mu - \int_S \negt{f} \d \mu.
$$
We denote the set of all positive integrable function as $\posintegrablelfun$, and  the set of all integrable functions as $\integrablelfun$.
\begin{proposition}\label{res:L1_is_ideal_of_measurable_function_space}
$\integrablelfun$ is an order ideal of $\lmeasfun$, so it inherits the structure of a \sDc~ vector lattice from $\lmeasfun$. The integral is a positive, $\L$-linear operator from $\integrablelfun$ to $\L$.
\end{proposition}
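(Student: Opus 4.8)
The plan is to verify directly that $\integrablelfun$ is a solid linear subspace of $\lmeasfun$ — which is precisely what ``order ideal'' means — and then to note that the $\sigma$-Dedekind completeness comes for free. Since $\lmeasfun$ is a \sDc\ vector lattice by \Cref{res:measurable_function_space_is_sDc}, and since any order ideal $I$ of a \sDc\ vector lattice $E$ is itself \sDc\ (if $(h_n)$ increases in $I$ with an upper bound $h\in I$, then $\bigvee_n h_n$ exists in $E$ and satisfies $h_1\le\bigvee_n h_n\le h$, so it lies in $I$ by solidity), once $\integrablelfun$ is known to be an order ideal the whole statement follows. Moreover, it is enough to check that $\integrablelfun$ is nonempty, solid, and closed under addition, because closure under scalar multiplication (real, and more generally by elements of $\ls$) then follows formally from solidity together with additivity.

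The two inputs I would use are \Cref{res:integral_on_positive_functions_has_usual_properties}, supplying monotonicity and $\ls$-linearity of the integral on $\poslmeasfunext$, and the elementary observation that $\posls$ is ``solid'' inside $\poslsext$: if $0\le\alpha\le\l$ with $\l\in\posls$, then $\alpha\in\posls$, because $\infty$ lies below no finite element. Combining these gives: if $g\le f$ pointwise with $g,f\in\poslmeasfunext$ and $\int_S f\d\npm\in\posls$, then $\int_S g\d\npm\in\posls$. Solidity of $\integrablelfun$ is then immediate: for $f\in\integrablelfun$ and $g\in\lmeasfun$ with $\abs g\le\abs f$ pointwise, we have $\abs g,\abs f\in\poslmeasfun$ and $\int_S\abs f\d\npm\in\ls$, whence $\int_S\abs g\d\npm\in\ls$, i.e.\ $g\in\integrablelfun$. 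Closure under addition follows the same way from $\abs{f_1+f_2}\le\abs{f_1}+\abs{f_2}$ combined with the additivity $\int_S(\abs{f_1}+\abs{f_2})\d\npm=\int_S\abs{f_1}\d\npm+\int_S\abs{f_2}\d\npm$ of \Cref{res:integral_on_positive_functions_has_usual_properties}\partref{part:integral_on_positive_functions_has_usual_properties_1}; and for the $\ls$-module structure one argues identically, using in addition the standard $f$-algebra identity $\abs{\l f}=\abs\l\,\abs f$, the invariance of $\poslmeasfun$ under multiplication by $\posls$, and the $\ls$-homogeneity of the positive integral.

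I do not expect any genuine obstacle here: once integrability is unwound through the condition $\int_S\abs f\d\npm\in\ls$, the proposition is bookkeeping with the monotonicity, additivity and $\ls$-homogeneity of the integral on positive functions, plus the trivial solidity of $\posls$ in $\poslsext$. The only ingredient that is not purely formal is the $f$-algebra identity $\abs{\l f}=\abs\l\,\abs f$ needed for the scalar-multiplication clause (and hence for recognising $\integrablelfun$ as an $\ls$-submodule), and that is standard. The claimed inheritance of the \sDc\ vector lattice structure is then immediate from the opening remark about order ideals.
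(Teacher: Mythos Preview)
Your proposal is correct and follows essentially the same approach as the paper: the paper's proof consists of the single sentence ``This follows immediately from \Cref{res:integral_on_positive_functions_has_usual_properties},'' and what you have written is precisely a careful unpacking of that immediacy, using monotonicity and additivity of the positive integral to verify solidity and closure under addition, then invoking \Cref{res:measurable_function_space_is_sDc} for the inherited $\sigma$-Dedekind completeness.
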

\begin{proof}
That $\integrablelfun$ is an order ideal of $\lmeasfun$ follows immediately from \Cref{res:integral_on_positive_functions_has_usual_properties}.  Invoking \Cref{res:integral_on_positive_functions_has_usual_properties}, the usual arguments show that the integral is a positive $\L$-linear operator.
\end{proof}

\section{Properties of the integral}\label{sec:properties_of_the_intergal}

\begin{thm}[Monotone convergence theorem]\label{res:MCT_increasing} Suppose $(f_n)$ is a monotone increasing sequence in $\poslmeasfunext$, and let $f$ be the pointwise supremum of $(f_n)$. Then $f\in\poslmeasfunext$ and
$$
\int_S f \d \mu = \bigvee_{n=1}^\infty \int_S f_n \d\mu.$$
Therefore, if $\dis{\bigvee_{n=1}^\infty \int_S f_n \d\mu \in \ls}$, then $f$ is integrable, so it is almost everywhere finite.
	\end{thm}

The proof of this result is similar to the proof of the monotone convergence theorem in \cite[Theorem~6.9]{de_jeu_jiang:2022a}.

\begin{proof}
It follows from \Cref{res:pointwise sup of increasing sequence of measurable functions is measurable} that $f\in \poslmeasfunext$.  In the proof of that result the following is established:  There exists a sequence $(\psi_n)$ in $\lstepfun$ so that $\psi_n\uparrow f$ pointwise, and $\psi_n\leq f_n$ for every $n\in \N$.  It follows that $\int_S \psi_n \d\mu \leq \int_S f_n \d\mu$ for all $n\in\N$, and so
 \[
 \int_S f \d\mu = \bigvee_{n=1}^\infty \int_S \psi_n \d\mu \leq \bigvee_{n=1}^\infty \int_S f_n \d\mu.
 \]
 The positivity of the integral implies that $\dis{\int_S f \d\mu \geq \bigvee_{n=1}^\infty \int_S f_n \d\mu}$. We conclude that
 $$
 \int_S f \d\mu = \bigvee_{n=1}^\infty \int_S f_n \d\mu.\qedhere
 $$
\end{proof}

The straightforward proof of the following result is similar to the proof of \cite[Corollary~6.10]{de_jeu_jiang:2022a}.

\begin{corollary}\label{res:MCT_decreasing} Let $(f_n)$ be a decreasing sequence in $\poslmeasfunext$ and $f\in \poslmeasfunext$ be such that  $f_n(s)\downarrow f(s)$ in $\poslsext$ for almost all $s\in \pset$. If $\dis{\int_S f_1 \d\mu \in\ls}$, then $$\int_S f \d\mu = \bigwedge_{n=1}^\infty \int_S f_n \d\mu.$$
\end{corollary}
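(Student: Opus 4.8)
The plan is to reduce the decreasing case to the Monotone Convergence Theorem (\Cref{res:MCT_increasing}) by passing to differences, exactly as in \cite[Corollary~6.10]{de_jeu_jiang:2022a}. First I would handle the almost-everywhere issue: by \Cref{res:almost_everywhere_zero_function_has_zero_integral} and \Cref{res:integral_on_positive_functions_has_usual_properties}, modifying each $f_n$ and $f$ on the null set where the pointwise decrease fails changes none of the integrals, so I may assume $f_n(s) \downarrow f(s)$ in $\poslsext$ for \emph{every} $s \in S$. Next, since $\int_S f_1 \d\mu \in \ls$, \Cref{res:positive_integrable_is_almost_everywhere_finite} gives that $f_1$ is almost everywhere finite; discarding another null set, I may assume $f_1(s) \in \posls$ for all $s$, hence all $f_n(s)$ and $f(s)$ are finite as well.

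Now set $g_n \coloneqq f_1 - f_n$ for $n \in \N$. Then $(g_n)$ is a monotone increasing sequence in $\poslmeasfun$ (each $g_n$ is measurable as the difference of two finite-valued measurable functions, and $g_n \geq 0$ since $f_n \leq f_1$), with pointwise supremum $g \coloneqq f_1 - f$. By \Cref{res:MCT_increasing}, $g \in \poslmeasfunext$ and
\[
\int_S g \d\mu = \bigvee_{n=1}^\infty \int_S g_n \d\mu.
\]
Using the linearity from \Cref{res:integral_on_positive_functions_has_usual_properties}\partref{part:integral_on_positive_functions_has_usual_properties_1} (valid since $\int_S f_1 \d\mu$ is finite, so no $\infty - \infty$ arises), the left side is $\int_S f_1 \d\mu - \int_S f \d\mu$ and the general term on the right is $\int_S f_1 \d\mu - \int_S f_n \d\mu$. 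Thus
\[
\int_S f_1 \d\mu - \int_S f \d\mu = \bigvee_{n=1}^\infty \left( \int_S f_1 \d\mu - \int_S f_n \d\mu \right) = \int_S f_1 \d\mu - \bigwedge_{n=1}^\infty \int_S f_n \d\mu,
\]
where the last equality is the standard identity $\sup_n (a - b_n) = a - \inf_n b_n$ in a vector lattice. Cancelling the finite element $\int_S f_1 \d\mu$ yields the claim.

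I do not anticipate a serious obstacle here; the only points requiring care are bookkeeping ones. The main thing to watch is that every manipulation of the extended-valued integrals is legitimate: linearity of the integral and the lattice identity $\sup_n(a - b_n) = a - \inf_n b_n$ are being applied with $a = \int_S f_1 \d\mu \in \ls$ finite, which is exactly what the hypothesis guarantees and what lets us avoid the pathologies of arithmetic in $\lsext$. The reduction to everywhere-decreasing, everywhere-finite data via the null-set modifications is routine given \Cref{res:almost_everywhere_zero_function_has_zero_integral} and \Cref{res:positive_integrable_is_almost_everywhere_finite}, and should be stated in one or two sentences rather than belabored.
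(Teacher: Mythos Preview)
Your proposal is correct and is precisely the argument the paper intends: the paper does not write out a proof but simply says the result follows by the same straightforward argument as \cite[Corollary~6.10]{de_jeu_jiang:2022a}, which is exactly the reduction $g_n \coloneqq f_1 - f_n$ to \Cref{res:MCT_increasing} that you carry out. Your handling of the null-set modifications and the finiteness hypothesis needed to make the subtraction and cancellation legitimate is the expected bookkeeping.
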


\begin{proof}
Redefining the $f_n$ and $f$ to be zero on a suitable set of measure $0$ if necessary, we may suppose that $f_n\downarrow f$ pointwise on $S$, see \Cref{res:almost_everywhere_zero_function_has_zero_integral}.  Then $f_1-f_n\uparrow f_1-f$ pointwise on $S$, and so by \Cref{res:MCT_increasing}, $\displaystyle \int_S (f_1-f_n)\d \mu \uparrow \int_S (f_1-f) \d\mu$.  Since $0\leq f\leq f_n\leq f_1$ for all $n\in \N$ and $\displaystyle \int_S f_1 \d\mu \in \L$, it follows from the monotonicity of the integral that $\displaystyle \int_S f \d\mu$ and the $\displaystyle \int_S f_n \d\mu$ also belong to $\L$.  Therefore, by \Cref{res:L1_is_ideal_of_measurable_function_space},
$$
\int_S f_1 \d\mu - \int_S f_n \d\mu \uparrow \int_S f_1 \d\mu - \int_S f \d\mu
$$
which gives $\displaystyle \int_S f_n \d\mu \downarrow \int_S f\d\mu$.
\end{proof}

\begin{lemma}[Fatou's lemma]\label{res:Fatou's_lemma}
Let $(f_n)$ be sequence in $\poslmeasfun$.  Then
\[\int_S\bigvee_{n=1}^\infty \bigwedge_{k=n}^{\infty} f_k \d\mu \leq \bigvee_{n=1}^\infty \bigwedge_{k=n}^{\infty}\int_S f_k \d\mu .\]
\end{lemma}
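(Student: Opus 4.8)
The plan is to reduce Fatou's lemma to the Monotone Convergence Theorem (\Cref{res:MCT_increasing}), exactly as in the classical real-valued proof. First I would set $g_n \coloneqq \bigwedge_{k=n}^\infty f_k$ for each $n \in \N$. I need to know that each $g_n$ lies in $\poslmeasfunext$: this is the countable-infimum analogue of the fact that $\lmeasfun$ is $\sigma$-Dedekind complete. Concretely, for each $k \geq n$ take $(\phi_{k,j})_j \subseteq \poslstepfun$ with $\phi_{k,j} \uparrow f_k$ pointwise; but infima do not behave as nicely as suprema under such approximations, so instead I would observe that $g_n = \bigwedge_{k=n}^\infty f_k$ is the pointwise decreasing limit of the finite infima $h_{n,N} \coloneqq \bigwedge_{k=n}^N f_k$, each of which is in $\poslmeasfun$ (finite lattice operations preserve measurability, as noted after \Cref{def:positive_extended_valued_measurable_function}). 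To conclude $g_n \in \poslmeasfunext$ I would appeal to the representation in \eqref{e:L_inside_continuous_functions}: a pointwise limit of a monotone sequence of measurable functions is measurable, which one can extract from the $\sigma$-Dedekind completeness argument in the proof of \Cref{res:measurable_function_space_is_sDc} (the diagonal construction there works verbatim once one truncates, since $h_{n,N} \downarrow g_n$ and each $h_{n,N} \leq f_n \in \poslmeasfun$).

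Next I would note that $(g_n)$ is a monotone increasing sequence in $\poslmeasfunext$ with pointwise supremum $g \coloneqq \bigvee_{n=1}^\infty g_n = \bigvee_{n=1}^\infty \bigwedge_{k=n}^\infty f_k$, which is precisely the integrand on the left-hand side. By the Monotone Convergence Theorem,
\[
\int_S g \d\mu = \bigvee_{n=1}^\infty \int_S g_n \d\mu.
\]
Now for each fixed $n$ and each $k \geq n$ we have $g_n \leq f_k$ pointwise, so by monotonicity of the integral (\Cref{res:integral_on_positive_functions_has_usual_properties}\partref{part:integral_on_positive_functions_has_usual_properties_2}), $\int_S g_n \d\mu \leq \int_S f_k \d\mu$ in $\poslsext$, and hence $\int_S g_n \d\mu \leq \bigwedge_{k=n}^\infty \int_S f_k \d\mu$. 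Taking the supremum over $n$ gives
\[
\int_S g \d\mu = \bigvee_{n=1}^\infty \int_S g_n \d\mu \leq \bigvee_{n=1}^\infty \bigwedge_{k=n}^\infty \int_S f_k \d\mu,
\]
which is the assertion. All the displayed inequalities and suprema are taken in $\poslsext$, which has all suprema, so there is no difficulty interpreting either side even when some integrals are $\infty$.

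The main obstacle is the first step: establishing that $g_n = \bigwedge_{k=n}^\infty f_k \in \poslmeasfunext$. Measurability as defined here is via pointwise \emph{increasing} approximation by positive step functions, and infima of increasing approximants need not be increasing, so one cannot just intersect the approximating sequences directly. I expect the cleanest route is to reuse the diagonal argument of \Cref{res:measurable_function_space_is_sDc}/\Cref{res:MCT_increasing} applied to the truncated infima $h_{n,N}$, or alternatively to record a short lemma (a countable-infimum companion to $\sigma$-Dedekind completeness of $\lmeasfun$) before stating Fatou. Everything after that is a routine transcription of the classical argument, using only MCT and monotonicity of the integral, both already available.
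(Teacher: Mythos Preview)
Your proof is correct and follows exactly the same route as the paper's: measurability of $g_n = \bigwedge_{k=n}^\infty f_k$, then monotonicity of the integral to get $\int_S g_n\,\d\mu \leq \bigwedge_{k=n}^\infty \int_S f_k\,\d\mu$, then MCT on the increasing sequence $(g_n)$. Your ``main obstacle'' is not actually an obstacle, since the $f_k$ lie in $\poslmeasfun$ (finite-valued) and are bounded below by $0$, so the $\sigma$-Dedekind completeness of $\lmeasfun$ established in \Cref{res:measurable_function_space_is_sDc} immediately yields $g_n \in \poslmeasfun$; the paper dispatches this in a single line by citing that proposition, and you need not redo any diagonal construction.
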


The proof for this lemma is similar to \cite[Theorem~6.12]{de_jeu_jiang:2022a}.

\begin{proof}
For any $n\geq 1$, the set $\set{f_k}_{k=n}^\infty$
is bounded from below by $0$. By the Dedekind $\sigma$-completeness of $\lmeasfun$, $\displaystyle\bigwedge_{k=n}^{\infty} f_k$ is measurable. Since $0\leq \displaystyle\bigwedge_{k=n}^{\infty} f_k\leq f_j$
 for all $n\in\N$ and $j\geq n$, by part \partref{part:integral_on_positive_functions_has_usual_properties_2} of \Cref{res:integral_on_positive_functions_has_usual_properties},
$$
 \int_S \bigwedge_{k=n}^{\infty} f_k \d\mu \leq  \int_S f_j \d\mu,
$$
for every $n\in\N$ and $j\geq n$. Therefore
\begin{equation}
\int_S \bigwedge_{k=n}^{\infty} f_k \d\mu \leq \bigwedge_{k=n}^\infty \int_S f_k \d\mu\label{eq:Fatou's_lemma_1}
\end{equation}
for every $n\in\N$. Applying \Cref{res:MCT_increasing} to the monotone increasing sequence $\displaystyle(\bigwedge_{k=n}^{\infty} f_k)$, we obtain
\begin{equation}
\int_S\bigvee_{n=1}^\infty \bigwedge_{k=n}^{\infty} f_k \d\mu = \bigvee_{n=1}^\infty \int_S \bigwedge_{k=n}^\infty f_k \d\mu.\label{eq:Fatou's_lemma_2}
\end{equation}
Combining \eqref{eq:Fatou's_lemma_1} and \eqref{eq:Fatou's_lemma_2} completes the proof.
\end{proof}

\begin{thm}[Dominated convergence theorem]\label{res:DCT} Let $(f_n)$ and $f$ be $\ls$-valued measurable functions on $\pset$ and suppose that $f_n\to f$ pointwise on $S$. If there exists $g\in\integrablelfun$ such that $\abs{f_n}\leq g$ for all $n\in\N$, then
\begin{itemize}
	\item[(1)]$f$ and the $f_n$ are all integrable functions;
	\item[(2)]$\displaystyle{\bigwedge_{n=1}^\infty \bigvee_{k=n}^\infty \int_S \abs{f_k-f} \d\mu = 0}$;
	\item[(3)]$\dis{\int_S f \d\mu = \bigvee_{n=1}^\infty \bigwedge_{k=n}^\infty \int_S f_k \d\mu = \bigwedge_{n=1}^\infty \bigvee_{k=n}^\infty \int_S f_k \d\mu}$.
\end{itemize}
\end{thm}

	The proof for this theorem  is similar to \cite[Theorem~6.13]{de_jeu_jiang:2022a}.

\begin{proof}
Since $f_n\to f$ pointwise and $\abs{f_n}\leq g$, it follows that $\abs{f}\leq g$.
	Now the positivity of the integral implies $(1)$.
	
	Since $2g\geq 2g-\abs{f_k-f}\geq 2g-\abs{f_k}-\abs{f}\geq 0$ for every $k\in\N$, it follows that $(2g -\abs{f_k-f})$ is a sequence in $\integrablelfun$ bounded from below by $0$. So for any $n$,   $\displaystyle{h_n \coloneqq \bigwedge_{k=n}^\infty(2g-\abs{f_k-f})}$ exists in $\pos{\integrablelfun}$, see \Cref{res:L1_is_ideal_of_measurable_function_space}.
	  Clearly $(h_n)$ is a monotone increasing sequence in $\integrablelfun$ and
	$h_n\uparrow 2g$ pointwise. So by  \Cref{res:MCT_increasing} and \Cref{res:MCT_decreasing},
\begin{align*}
	\int_S 2g \d\mu &= \bigvee_{n=1}^\infty \int_S h_n \d\mu \\
	&= \bigvee_{n=1}^\infty \int_S \bigwedge_{k=n}^\infty(2g-\abs{f_k-f}) \d\mu \\
	&\leq \bigvee_{n=1}^\infty \bigwedge_{k=n}^\infty \int_S 2g-\abs{f_k-f} \d\mu \\
	&=\int_S 2g \d\mu - \bigwedge_{n=1}^\infty \bigvee_{k=n}^\infty \int_S \abs{f_k-f} \d\mu.
\end{align*}
By subtracting $\dis{\int_S 2g \d\mu}$ on both side of the above inequatity, we get
$$
\bigwedge_{n=1}^\infty \bigvee_{k=n}^\infty \int_S \abs{f_k-f} \d\mu \leq 0.$$
The reverse inequality is obvious, so the proof of part $(2)$ is complete.

Since $g+f_n\geq 0$ for all $n\in\N$, Fatou's lemma shows that
\begin{align*}
\int_S (g+f) \d\mu &= \int_S \bigvee_{n=1}^\infty \bigwedge_{k=n}^\infty (g+f_n) \d\mu \\
	&\leq \bigvee_{n=1}^\infty \bigwedge_{k=n}^\infty \int_S (g+f_n) \d\mu\\
	&=\int_S g \d\mu + \bigvee_{n=1}^\infty \bigwedge_{k=n}^\infty \int_S f_n \d\mu,
\end{align*}
from which we see that
\begin{equation}\label{1_eq:dominated_convergence_1}
	\int_S f \d\mu \leq \bigvee_{n=1}^\infty \bigwedge_{k=n}^\infty \int_S f_n \d\mu.
\end{equation}
Since $g-f_n\geq 0$ for all $n\geq 1$, Fatou's lemma shows that
\begin{align*}
\int_S (g-f) \d\mu &= \int_S \bigvee_{n=1}^\infty \bigwedge_{k=n}^\infty (g-f_n) \d\mu \\
	&\leq \bigvee_{n=1}^\infty \bigwedge_{k=n}^\infty \int_S (g-f_n) \d\mu\\
	&=\int_S g \d\mu - \bigwedge_{n=1}^\infty \bigvee_{k=n}^\infty \int_S f_n \d\mu,
\end{align*}
from which we see that
\begin{equation}\label{1_eq:dominated_convergence_2}
	\int_S f \d\mu \leq \bigwedge_{n=1}^\infty \bigvee_{k=n}^\infty \int_S f_n \d\mu.
\end{equation}
Combining \eqref{1_eq:dominated_convergence_1} and  \eqref{1_eq:dominated_convergence_2} we have
\[
\bigwedge_{n=1}^\infty \bigvee_{k=n}^\infty \int_S f_n \d\mu \leq \int_S f \leq \bigvee_{n=1}^\infty \bigwedge_{k=n}^\infty \int_S f_n \d\mu \leq \bigwedge_{n=1}^\infty \bigvee_{k=n}^\infty \int_S f_n \d\mu,
\]
which completes the proof of $(3)$.
\end{proof}

\section{The $\ls$-module $\pintegrablelfun$}\label{sec:lp-spaces}
	An $\ls$-valued measurable function is called \emph{essentially bounded} if there exists $\lambda\in\pos{\ls}$ such that $\abs{f}\leq \lambda$ $\npm$-almost everywhere, i.e. the complement of $\set{s\colon \abs{f(s)}\leq \lambda}$ is contained in a measurable set with measure zero. Let $\eblfun$ denote the collection of all essentially bounded functions. For any essentially bounded function $f$ we define $\enorm{f} \coloneqq \inf\set{\lambda\colon \abs{f}\leq \lambda \ \npm\textup{-almost everywhere} }$.  The space $\eblfun$ is clearly an order ideal in $\lmeasfun$, and so a Dedekind $\sigma$-complete vector lattice.

Let $1\leq p < \infty$.  Recall from Section \ref{sec:preliminaries} that $\lambda^p$ is defined in $\ls$ for all $\lambda\in\ls$.   An $\ls$-valued measurable function $f$ is called \emph{$p$-integrable} if $\abs{f}^p\in\integrablelfun$; we denote $\pintegrablelfun\coloneqq\set{f: f\ \textup{is}\  p\textup{-integrable}}$. Since $(|f| \vee |g|)^p = |f|^p \vee |g|^p$, similarly to \Cref{res:L1_is_ideal_of_measurable_function_space}, $\pintegrablelfun$ is also an order ideal of $\lmeasfun$ and hence a Dedekind $\sigma$-complete vector lattice. As in the classical case, we introduce an $\ls$-seminorm on $\pintegrablelfun$ by $$\dis{\pnorm{f}\coloneqq\left(\int_S \abs{f}^p \d\mu \right)^{\frac{1}{p}}},$$ for any $f\in\pintegrablelfun$.  The following result shows that each $\pnorm{\cdot}$ is indeed an $\ls$-seminorm.

\begin{lemma}\label{res:step_function_is_in_Lp}
	For any $p\in[1,\infty]$, $\lstepfun\subseteq\pintegrablelfun$. Furthermore, for any $f$ and $g$ in $\lstepfun$, and for any $\lambda\in\ls$,
	\begin{enumerate}[(1)]
		\item \label{part:step_function_is_in_Lp_1} $\pnorm{f+g}\leq\pnorm{f}+\pnorm{g}$;
		\item \label{part:step_function_is_in_Lp_2} $\pnorm{\lambda f}=\abs{\lambda}\pnorm{f} $;
		\item \label{part:step_function_is_in_Lp_3}$\norm{fg}_1\leq\pnorm{f}\qnorm{g}$ whenever $\frac{1}{p}+\frac{1}{q}=1$, with $\frac{1}{\infty}$ defined as $0$.
	\end{enumerate}
	\end{lemma}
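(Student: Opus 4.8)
The plan is to first settle the membership $\lstepfun\subseteq\pintegrablelfun$ and record an explicit formula for the essential supremum of a step function, then to reduce \partref{part:step_function_is_in_Lp_1}--\partref{part:step_function_is_in_Lp_3} for finite exponents to the corresponding inequalities for the $\L$-valued sequence spaces of \cite{L-functional_analysis}, and finally to dispatch the cases involving $\infty$ directly. Fix $f,g\in\lstepfun$; refining the underlying partitions we may write $f=\sum_{j=1}^{m}a_j\indicator{B_j}$ and $g=\sum_{j=1}^{m}b_j\indicator{B_j}$ for one partition $B_1,\dots,B_m\in\salg$ of $\pset$ and $a_j,b_j\in\L_1$. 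Since $0\le\lambda\le c\unit$ forces $0\le\lambda^p\le c^p\unit$, we have $\abs{f}^p=\sum_{j=1}^{m}\abs{a_j}^p\indicator{B_j}\in\poslstepfun$; and since $\int_{\pset}\abs{\psi}\d\npm$ is a finite sum of elements of $\L^+$ for any measurable step function $\psi$, every measurable step function is integrable, so $\abs f^p$ is integrable and $f\in\pintegrablelfun$ for each $1\le p<\infty$. For $p=\infty$ we have $\abs f\le\bigvee_{j=1}^{m}\abs{a_j}\in\L_1$, so $f$ is essentially bounded; since no $\npm$-null set can contain a $B_j$ with $\npm(B_j)\neq 0$ (otherwise $\npm(B_j)\le\npm(N)=0$), the essential bound is determined by the non-null blocks only, which gives the explicit formula $\enorm f=\bigvee\{\abs{a_j}:\npm(B_j)\neq 0\}\in\L_1$, as well as $\abs f\le\enorm f$ $\npm$-a.e.

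Now let $1\le p<\infty$ and, for part \partref{part:step_function_is_in_Lp_3}, let $1<q<\infty$ with $\frac1p+\frac1q=1$. Let $\Theta_p$ be the map sending a step function $\sum_{j=1}^{m}\lambda_j\indicator{B_j}$ (with $\lambda_j\in\L$) on the partition $B_1,\dots,B_m$ to $(\lambda_j\,\npm(B_j)^{1/p})_{j=1}^{m}\in\ell^p(\{1,\ldots,m\},\L)$, where $\npm(B_j)^{1/p}\in\L^+$ is the pointwise $p$-th root computed in $\continftystone$ via \eqref{e:L_inside_continuous_functions}. This map is $\L$-linear, and computing pointwise in $\continftystone$ one has $\abs{\lambda_j\,\npm(B_j)^{1/p}}^p=\abs{\lambda_j}^p\npm(B_j)$, whence $\sum_{j=1}^{m}\abs{\lambda_j\,\npm(B_j)^{1/p}}^p=\int_{\pset}\abs{\sum_{j}\lambda_j\indicator{B_j}}^p\d\npm$; therefore $\Theta_p$ preserves the norm, in the sense that $\norm{\Theta_p(\psi)}_{\ell^p}=\pnorm{\psi}$. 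Moreover $\Theta_1(fg)=\Theta_p(f)\,\Theta_q(g)$, the product taken coordinatewise, because $\npm(B_j)^{1/p}\npm(B_j)^{1/q}=\npm(B_j)$ when $\frac1p+\frac1q=1$. Feeding these facts into the triangle inequality, the homogeneity of the norm, and Hölder's inequality for $\ell^p(\{1,\ldots,m\},\L)$ established in \cite{L-functional_analysis}, we obtain \partref{part:step_function_is_in_Lp_1}, \partref{part:step_function_is_in_Lp_2} and \partref{part:step_function_is_in_Lp_3} whenever the exponents are finite.

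For the remaining cases we use the formula for $\enorm\cdot$ above. Since multiplication by $\abs\lambda\ge 0$ preserves finite suprema (the $f$-algebra property), $\enorm{\lambda f}=\bigvee\{\abs\lambda\abs{a_j}:\npm(B_j)\neq 0\}=\abs\lambda\enorm f$, which is \partref{part:step_function_is_in_Lp_2}; and $\enorm{f+g}=\bigvee\{\abs{a_j+b_j}:\npm(B_j)\neq 0\}\le\bigvee\{\abs{a_j}:\npm(B_j)\neq 0\}+\bigvee\{\abs{b_j}:\npm(B_j)\neq 0\}=\enorm f+\enorm g$, which is \partref{part:step_function_is_in_Lp_1}. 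For \partref{part:step_function_is_in_Lp_3} with $q=\infty$ (the case $p=\infty$ being symmetric): in $\int_{\pset}\abs{fg}\d\npm=\sum_{j=1}^{m}\abs{a_j}\abs{b_j}\npm(B_j)$ the blocks with $\npm(B_j)=0$ drop out, while $\abs{b_j}\le\enorm g$ on the others, so $\norm{fg}_1\le\enorm g\sum_{j=1}^{m}\abs{a_j}\npm(B_j)=\enorm g\,\norm f_1$, which is the assertion since $\pnorm f=\norm f_1$ for $p=1$.

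The step needing the most care is the interface between the $\L$-module structure and the $f$-algebra/representation structure used for $\Theta_p$: the identities involving $p$-th powers, $p$-th roots and products must be verified by passing to the Stonean representation \eqref{e:L_inside_continuous_functions} and computing pointwise in $\continftystone$, and the explicit form of $\enorm\cdot$ hinges on the observation that no $\npm$-null set can absorb a block of positive measure. If one prefers not to invoke \cite{L-functional_analysis}, the same representation gives a self-contained proof of \partref{part:step_function_is_in_Lp_1} and \partref{part:step_function_is_in_Lp_3} for finite exponents: evaluate all the functions involved at the points of $\stone$ at which every $\npm(B_j)$ is finite, which form a dense open subset of $\stone$; there the inequalities reduce to the classical finite weighted Minkowski and Hölder inequalities, and one then extends them from this dense set to all of $\stone$ by continuity.
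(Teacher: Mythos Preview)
Your argument is correct and follows essentially the same route as the paper: both proofs write $f$ and $g$ on a common partition and reduce the finite-exponent inequalities to the corresponding $\ell^p(\{1,\dots,m\},\L)$ inequalities of \cite{L-functional_analysis} via the substitution $\lambda_j\mapsto\lambda_j\,\npm(B_j)^{1/p}$. You package this substitution as an explicit norm-preserving $\L$-linear map $\Theta_p$, whereas the paper carries out the identical manipulation in-line; your treatment of the $p=\infty$ cases (and of part \partref{part:step_function_is_in_Lp_3} when one exponent is infinite) is a little more thorough, as the paper only remarks that these follow from lattice properties of the absolute value, but this is a difference of exposition rather than of method.
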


\begin{proof}
Since measurable step functions are bounded, hence essentially bounded,  \partref{part:step_function_is_in_Lp_1} and \partref{part:step_function_is_in_Lp_2} for $p=\infty$ are direct consequences of the properties of absolute value in vector lattices.

If $p\in[1,\infty)$, then for any measurable step function $f$, $\int_S \abs{f}^p \d\mu$ is a finite sum in $\ls$, hence $f\in\pintegrablelfun$.

Take any measurable step functions $f$ and $g$. Then there is a measurable partition $\{A_1,\ldots,A_n\}$ of $S$ so that \[f=\sum_{i=1}^n\alpha_i\indicator{\mss_i},\quad g=\sum_{i=1}^n\beta_i\indicator{\mss_i}\]
for some $\alpha_i, \beta_i$ in $\ls$.
Then, using \cite[Theorem~4.0.4]{L-functional_analysis} in the fourth step, we get
\begin{align*}
	\pnorm{f+g}&=\left[\sum_{i=1}^n\abs{\alpha_i+\beta_i}\npm(\mss_i)\right]^{\frac{1}{p}}\\
	&=\left[\sum_{i=1}^n\abs{\alpha_i+\beta_i}\npm(\mss_i)^{\frac{1}{p} \cdot p}\right]^{\frac{1}{p}}\\
	&=\left[\sum_{i=1}^n\left(\abs{\alpha_i+\beta_i}\npm(\mss_i)^{\frac{1}{p} }\right)^p\right]^{\frac{1}{p}}\\
	&\leq \left[\sum_{i=1}^n\left(\abs{\alpha_i}\npm(\mss_i)^{\frac{1}{p} }\right)^p\right]^{\frac{1}{p}}+\left[\sum_{i=1}^n\left(\abs{\beta_i}\npm(\mss_i)^{\frac{1}{p} }\right)^p\right]^{\frac{1}{p}}\\
	&=\left[\sum_{i=1}^n\abs{\alpha_i}^p\npm(\mss_i)\right]^{\frac{1}{p}}+\left[\sum_{i=1}^n\abs{\beta_i}^p\npm(\mss_i)\right]^{\frac{1}{p}}\\
	&=\pnorm{f}+\pnorm{g}.
	\end{align*}
We also obtain
\[
	\pnorm{\lambda f}=\left[\sum_{i=1}^n\abs{\lambda\alpha_i}^p\npm(\mss_i)\right]^{\frac{1}{p}}=\abs{\lambda}\left[\sum_{i=1}^n\abs{\alpha_i}^p\npm(\mss_i)\right]^{\frac{1}{p}}=\abs{\lambda}\pnorm{f}.
\]
Using \cite[Theorem~4.0.3]{L-functional_analysis} in the third step, we get
\begin{align*}
	\norm{fg}_1&=\sum_{i=1}^n\abs{\alpha_i \beta_i}\npm(\mss_i)\\
	&=\sum_{i=1}^n\abs{\alpha_i \beta_i}\npm(\mss_i)^{\frac{1}{p}+\frac{1}{q}}\\
	&=\sum_{i=1}^n\abs{\alpha_i}\npm(\mss_i)^{\frac{1}{p}} \abs{\beta_i}\npm(\mss_i)^{\frac{1}{q}}\\
	&\leq\left[\sum_{i=1}^n\abs{\alpha_i\npm(\mss_i)^{\frac{1}{p}}}^p\right]^{\frac{1}{p}} \left[\sum_{i=1}^n\abs{\beta_i\npm(\mss_i)^{\frac{1}{q}}}^q\right]^{\frac{1}{q}}\\
	&\leq\left[\sum_{i=1}^n\abs{\alpha_i}^p\npm(\mss_i)\right]^{\frac{1}{p}} \left[\sum_{i=1}^n\abs{\beta_i}^q\npm(\mss_i)\right]^{\frac{1}{q}}\\
	&=\pnorm{f}\qnorm{g}. \qedhere
\end{align*}
\end{proof}

\begin{lemma}\label{res:continuity_of_p-norm_1}
For any sequence $(f_n)$ of positive functions in $\pintegrablelfun$ and every $f \in \pintegrablelfun$, if  $f_n \uparrow f$ pointwise, then $\dis{\int_S f_n^p \d\mu \uparrow \int_S f^p \d\mu}$, so that $\pnorm{f_n} \uparrow \pnorm{f}$.
\end{lemma}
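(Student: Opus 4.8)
The plan is to reduce the statement to the Monotone Convergence Theorem together with the order continuity of the power maps on $\L$ that was recalled in \Cref{sec:preliminaries}.

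First I would note that since $f_n, f \in \pintegrablelfun$, all of these functions are $\L$-valued (hence finite) and measurable, and the functions $f_n^p$ and $f^p$ lie in $\posintegrablelfun \subseteq \poslmeasfun$. From $f_n \uparrow f$ pointwise and the monotonicity of $\l \mapsto \l^p$ on $\pos\L$, the sequence $(f_n^p)$ is pointwise increasing; moreover, since $\l \mapsto \l^p$ is order continuous as a map $\L \to \L$, for each $s \in S$ we have $f_n(s)^p \uparrow f(s)^p$, i.e. $f_n^p \uparrow f^p$ pointwise.

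Next I would apply \Cref{res:MCT_increasing} to the monotone increasing sequence $(f_n^p)$ in $\poslmeasfunext$, whose pointwise supremum is $f^p$; this yields $\int_S f^p \d\mu = \bigvee_{n=1}^\infty \int_S f_n^p \d\mu$. By part \partref{part:integral_on_positive_functions_has_usual_properties_2} of \Cref{res:integral_on_positive_functions_has_usual_properties} the sequence $\bigl(\int_S f_n^p \d\mu\bigr)$ is increasing, and its supremum $\int_S f^p \d\mu$ is finite because $f \in \pintegrablelfun$; hence $\int_S f_n^p \d\mu \uparrow \int_S f^p \d\mu$ in $\L$.

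Finally, to pass to the seminorms I would take $p$-th roots: $\pnorm{f_n} = \bigl(\int_S f_n^p \d\mu\bigr)^{1/p}$ and $\pnorm{f} = \bigl(\int_S f^p \d\mu\bigr)^{1/p}$, and since $\l \mapsto \l^{1/p}$ is increasing and order continuous on $\L$ (again by the cited result on powers), the increasing convergence $\int_S f_n^p \d\mu \uparrow \int_S f^p \d\mu$ carries over to $\pnorm{f_n} \uparrow \pnorm{f}$. There is no substantial obstacle here: the only points requiring minor care are checking that all the functions are finite-valued and measurable so that the power maps $\L \to \L$ apply pointwise, and that $f^p$ is precisely the pointwise supremum of $(f_n^p)$ — both immediate from the order continuity of $\l \mapsto \l^p$.
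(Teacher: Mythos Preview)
Your proof is correct and follows essentially the same approach as the paper: use order continuity of $\lambda \mapsto \lambda^p$ to get $f_n^p \uparrow f^p$ pointwise, apply the Monotone Convergence Theorem (\Cref{res:MCT_increasing}) to obtain $\int_S f_n^p \d\mu \uparrow \int_S f^p \d\mu$, and then use order continuity of $\lambda \mapsto \lambda^{1/p}$ to pass to the seminorms. The paper's proof is just a more compressed version of what you wrote.
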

\begin{proof}
If  $f_n\uparrow f$ pointwise, then by \cite[Lemma~4.0.2(ii)]{L-functional_analysis}  $f_n^p\uparrow f^p$ pointwise. The monotone convergence theorem, \Cref{res:MCT_increasing}, implies that $\dis{\int_S f_n^p \d\mu \uparrow \int_S f^p \d\mu}$.
Again by \cite[Lemma~4.0.2(ii)]{L-functional_analysis}, we have $\pnorm{f_n}\uparrow\pnorm{f}$.
\end{proof}
The results of \Cref{res:step_function_is_in_Lp} can now be extended to $\pintegrablelfun$ by the order continuity of the $f$-algebra operations and \Cref{res:continuity_of_p-norm_1}.
\begin{proposition}\label{res:inequality_in_L-integral}
 Suppose $f$ and $g$ are $\ls$-valued measurable functions, and $\lambda\in\ls$.  Then for any $p\geq 1$,
 	\begin{enumerate}[(1)]
 	\item \label{part:inequality_in_L-integral_1}  if $f,g \in \pintegrablelfun$, then $f+g \in \pintegrablelfun$, and $$\pnorm{f+g}\leq\pnorm{f}+\pnorm{g} ;$$
 	\item \label{part:inequality_in_L-integral_2} if $f \in \pintegrablelfun$, then $\lambda f \in \pintegrablelfun$, and $$\pnorm{\lambda f}=\abs{\lambda}\pnorm{f} ;$$
 	\item \label{part:inequality_in_L-integral_3} if $f \in \pintegrablelfun$, $g \in \qintegrablelfun$, and $\frac{1}{p}+\frac{1}{q}=1$, then $fg \in \integrablelfun$ and $$\norm{fg}_1\leq\pnorm{f}\qnorm{g}.$$
 \end{enumerate}
\end{proposition}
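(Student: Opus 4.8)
The plan is to transfer the inequalities of \Cref{res:step_function_is_in_Lp}, already established for bounded measurable step functions, to arbitrary $p$-integrable functions by approximating from below. The ingredients are \Cref{t:approximate_by_bounded_step_functions} (every positive measurable function is a pointwise increasing limit of functions in $\poslstepfun$), the Dedekind completeness of $\ls$, the monotone convergence theorem \Cref{res:MCT_increasing}, \Cref{res:continuity_of_p-norm_1}, and the order continuity and monotonicity of $\l\mapsto\l^p$ and $\l\mapsto\l^{1/p}$ on $\posls$ (\cite[Lemma~4.1, Lemma~4.2]{L-functional_analysis}). Since all three norms in the statement depend only on absolute values, and since $\pintegrablelfun$ and $\integrablelfun$ are order ideals of $\lmeasfun$, it suffices in each case to prove the assertion for positive functions and afterwards pass to the general case via $\abs{f+g}\leq\abs{f}+\abs{g}$ and $\abs{fg}=\abs{f}\abs{g}$.

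Part \partref{part:inequality_in_L-integral_2} needs no approximation: for $f\in\pintegrablelfun$ one has $\abs{\lambda f}^p=\abs{\lambda}^p\abs{f}^p$ pointwise, and since $\abs{\lambda}^p\in\posls$, \Cref{res:integral_on_positive_functions_has_usual_properties}\partref{part:integral_on_positive_functions_has_usual_properties_1} gives $\int_S\abs{\lambda f}^p\d\mu=\abs{\lambda}^p\int_S\abs{f}^p\d\mu\in\ls$, whence $\lambda f\in\pintegrablelfun$; taking $p$-th roots yields $\pnorm{\lambda f}=\abs{\lambda}\pnorm{f}$.

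For \partref{part:inequality_in_L-integral_1}, first assume $f,g\geq 0$ and choose, by \Cref{t:approximate_by_bounded_step_functions}, sequences $(\phi_n)$, $(\psi_n)$ in $\poslstepfun$ with $\phi_n\uparrow f$ and $\psi_n\uparrow g$ pointwise; then $\phi_n+\psi_n\uparrow f+g$, so $(\phi_n+\psi_n)^p\uparrow(f+g)^p$ pointwise by \cite[Lemma~4.1$(ii)$]{L-functional_analysis}. Using \Cref{res:step_function_is_in_Lp}\partref{part:step_function_is_in_Lp_1}, positivity of the integral, and monotonicity of $p$-th roots together with $\phi_n\leq f$, $\psi_n\leq g$, we get $\int_S(\phi_n+\psi_n)^p\d\mu\leq(\pnorm{f}+\pnorm{g})^p$ for all $n$. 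Thus the increasing sequence $\big(\int_S(\phi_n+\psi_n)^p\d\mu\big)$ is bounded above in $\ls$, so its supremum exists in $\ls$ by Dedekind completeness, and \Cref{res:MCT_increasing} identifies it with $\int_S(f+g)^p\d\mu$; hence $f+g\in\pintegrablelfun$, and taking $p$-th roots of $\int_S(f+g)^p\d\mu\leq(\pnorm{f}+\pnorm{g})^p$ gives the triangle inequality (one may instead invoke \Cref{res:continuity_of_p-norm_1}). For general $f,g\in\pintegrablelfun$, apply this to $\abs{f},\abs{g}$ to obtain $\abs{f}+\abs{g}\in\pintegrablelfun$ with $\pnorm{\abs{f}+\abs{g}}\leq\pnorm{f}+\pnorm{g}$; since $0\leq\abs{f+g}^p\leq(\abs{f}+\abs{g})^p$ pointwise and $\integrablelfun$ is an order ideal, $\abs{f+g}^p\in\integrablelfun$, so $f+g\in\pintegrablelfun$, and monotonicity of the integral and of $p$-th roots completes the bound. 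Part \partref{part:inequality_in_L-integral_3} runs identically: for $f,g\geq 0$ take $\phi_n\uparrow f$, $\psi_n\uparrow g$ in $\poslstepfun$ (a product of bounded step functions is again one, and $\phi_n\psi_n\uparrow fg$ pointwise by order continuity of multiplication in $\ls$), bound $\int_S\phi_n\psi_n\d\mu=\norm{\phi_n\psi_n}_1\leq\pnorm{\phi_n}\qnorm{\psi_n}\leq\pnorm{f}\qnorm{g}$ via \Cref{res:step_function_is_in_Lp}\partref{part:step_function_is_in_Lp_3}, conclude from Dedekind completeness and \Cref{res:MCT_increasing} that $fg\in\integrablelfun$ with $\norm{fg}_1\leq\pnorm{f}\qnorm{g}$, and pass to general $f,g$ using $\abs{fg}=\abs{f}\abs{g}$ and the fact that $fg$ is measurable (being a finite sum of products of positive measurable functions).

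The pointwise order manipulations are routine; the one substantive point is the membership claim (that $f+g\in\pintegrablelfun$, resp.\ $fg\in\integrablelfun$), and this is precisely where Dedekind completeness of $\ls$ enters. The uniform upper bounds furnished by the step-function inequalities guarantee that the relevant increasing sequences of integrals have suprema in $\ls$, and \Cref{res:MCT_increasing} then identifies these suprema with the integrals of the limiting functions; everything else is bookkeeping with the order-continuous maps $\l\mapsto\l^p$ and $\l\mapsto\l^{1/p}$.
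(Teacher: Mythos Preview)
Your proposal is correct and follows essentially the same route as the paper: the paper's proof is the single sentence ``The results of \Cref{res:step_function_is_in_Lp} can now be extended to $\pintegrablelfun$ by the order continuity of the $f$-algebra operations and \Cref{res:continuity_of_p-norm_1},'' and you have simply written out what that sentence means in detail. The only minor deviation is that you handle \partref{part:inequality_in_L-integral_2} directly from \Cref{res:integral_on_positive_functions_has_usual_properties} without approximating, which is cleaner than routing it through step functions.
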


For any $1\leq p\leq \infty$, \Cref{res:inequality_in_L-integral} shows that $\norm{\cdot}_p$ is an $\L$-valued seminorm on $\pintegrablelfun$. We let $\kerp$ be the kernel of $\pnorm{\ }$, i.e. $\kerp=\set{f\in\pintegrablelfun\colon \pnorm{f}=0}$. Obviously, $\aezero=\set{f\colon \abs{f}\ \textup{ is zero}\ \npm\textup{-almost everywhere}}$.
\begin{thm}\label{res:embedding_relation_between_Lp_spaces} Let $1\leq p < q \leq \infty$. Then
	\begin{enumerate}[(1)]
	\item \label{part:embedding_relation_between_Lp_spaces_1} $\qintegrablelfun\subseteq\pintegrablelfun$;
	\item \label{part:embedding_relation_between_Lp_spaces_2} $\aezero\subseteq\kerone=\kerp$.
	\end{enumerate}
\end{thm}

\begin{proof}

Take any $p, q\in [1,\infty]$ satisfying  $1 \leq p<q\leq \infty$.
Suppose $f\in\pintegrablelfun$ so that $\abs{f}^p\in\integrablelfun$.  Note that $\frac{p}{q}+\frac{q-p}{q}=1$, so by using H\"{o}lder's inequality (\partref{part:inequality_in_L-integral_3} in \Cref{res:inequality_in_L-integral}) with $\abs{f}^p$ and the constant $1$ function $\chi_S$, we get
\begin{align*}
	\pnorm{f}^p=\norm{\abs{f}^p}_1&=\norm{\ \abs{f}^p\cdot \indicator{\pset}\ }_1\\
	&\leq\norm{\abs{f}^p}_{\frac{q}{p}}\cdot\norm{\indicator{\pset}}_{\frac{q}{q-p}}\\
	&=\qnorm{f}^p\npm(\pset)^{\frac{q-p}{q}}.
\end{align*}
We obtain
$$
\pnorm{f}\leq\qnorm{f}\cdot\npm(\pset)^{\frac{q-p}{qp}}.
$$
Therefore
\begin{equation}\label{eq:embedding_relation_between_Lp_spaces_1} \qintegrablelfun\subseteq\pintegrablelfun\end{equation}
and
\begin{equation}\label{eq:embedding_relation_between_Lp_spaces_2}
\kerq\subseteq\kerp.
\end{equation}

Next, we take $h\in\kerone$ and $1 \leq p < \infty$.
 If  $h$ is a measurable step function $h=\sum_{i=1}^n\lambda_i\indicator{\mss_i}$, then $\dis{\int_S \abs{h} \d\mu = 0}$ implies that $\abs{\lambda_i}\npm(\mss_i)=0$ for all $i=1,2,\dots, n$.  Hence $\abs{\lambda_i}^p\npm(\mss_i)=\abs{\lambda_i}^{p-1}\cdot\abs{\lambda_i}\npm(\mss_i)=0$ for all $i=1,2,\dots, n$. Therefore, $\qnorm{h}=\sum_{i=1}^n\abs{\lambda_i}^q\npm(\mss_i)=0$.

For the general case, we approximate $|h|$ by measurable step functions; then \Cref{res:continuity_of_p-norm_1} yields $\pnorm{h}=0$. Hence
\begin{equation}\label{eq:embedding_relation_between_Lp_spaces_3} \kerone\subseteq\kerp.
\end{equation}
Combining \eqref{eq:embedding_relation_between_Lp_spaces_2} and \eqref{eq:embedding_relation_between_Lp_spaces_3} yields
\[
\kerp=\kerone.
\]
The final statement $\aezero\subseteq\kerone$ is a direct consequence of \Cref{res:almost_everywhere_zero_function_has_zero_integral}.
\end{proof}

By \Cref{res:embedding_relation_between_Lp_spaces}  $\kerone=\kerp$ for any $1\leq p<\infty$ and we denote this set by $\kerall$. It follows from the monotonicity of the integral and the Monotone Convergence Theorem (\Cref{res:MCT_increasing}) that $\kerall$ is a $\sigma$-ideal in $\pintegrablelfun$ and that $\aezero$ is a $\sigma$-ideal in $\eblfun$.  Recall that an ideal in a vector lattice is a $\sigma$-ideal if it is closed with respect to suprema of increasing sequences. Therefore the quotient spaces $\ellp\coloneqq\pintegrablelfun/\mathcal{N}$ and $\ebllq \coloneqq \eblfun / \aezero$ are Archimedean vector lattices. We shall write $[f]$ for the image of  $f\in\pintegrablelfun$ ($1 \leq p \leq \infty$) under the quotient map. As in the classical case, the map defined by $\pnorm{[f]}=\pnorm{f}$ for any $[f]\in\ellp$ is an $\ls$-norm.

The proof that $\ebllq$ is sequentially complete is as in the classical case; modulo almost everywhere equivalence it is \cite[Theorem~3.5.2]{L-functional_analysis}. The situation for $\ellp$ is more complicated. We will first show that every absolutely converging series in $\ellp$ converges.

\begin{thm}\label{Lp_is_sigma_complete-L-normed_space}
	Let $1\leq p< \infty$. Then any absolutely converging series in $\ellp$ converges.
\end{thm}

\begin{proof}
	Let $\set{f_k}_{k=1}^\infty\subseteq\pintegrablelfun$ such that $\displaystyle\sum_{k=1}^\infty\pnorm{f_k}=\bigvee_{N=1}^{\infty}\sum_{k=1}^N\pnorm{f_k}$ exists in $\posls$; we denote its limit by $\lambda$.
	Let $G_n\coloneqq \displaystyle\sum_{k=1}^n\abs{f_k}$ for every $n\in\N$. Then $(G_n)$ is a monotone increasing sequence in $\pintegrablelfun$. We define $G$ as the pointwise limit of $(G_n)$.  Then $G_n^p\uparrow G^p$ pointwise. By the monotone convergence theorem, \Cref{res:MCT_increasing}, $G$ and $G^p$ are measurable and
 $$\int_S G^p \d\mu = \bigvee_{n=1}^\infty \int_S G_n^p \d\mu.$$
Using \partref{part:inequality_in_L-integral_1} of \Cref{res:inequality_in_L-integral}  we obtain
$$ \bigvee_{n=1}^\infty \int_S G_n^p \d\mu = \bigvee_{n=1}^\infty \int_S \left(\sum_{k=1}^n\abs{f_k}\right)^p \d\mu \leq \bigvee_{n=1}^\infty \sum_{k=1}^n \int_S \abs{f_k}^p \d\mu \leq \lambda^p.$$
Hence $G^p\in\integrablelfun$, i.e.  $G\in\pintegrablelfun$.

By \Cref{res:positive_integrable_is_almost_everywhere_finite}, $G(s)\in\ls$ almost everywhere, so for $\mss\coloneqq\set{s\colon G(s)\in\ls}$ we have that $\npm(A^c)=0$. We define, for all $n \geq 1$,
 $$F_n\coloneqq \sum_{k=1}^nf_k\indicator{\mss}+0\indicator{\mss^c} .$$
Then each $F_n$ is in $\lmeasfun$ and $\abs{F_n}\leq G$.
Since for all $m>n$,
		$$\abs{F_m(s)-F_n(s)}\leq
		\begin{cases}
		\sum_{k=n+1}^m \abs{f_k(s)} & \text{ if }s\in\mss\\
		0 &  \text{ if } s\in\mss^c,
		\end{cases} $$
	 $(F_n(s))$ order converges in $\ls$ for all $s\in\pset$. We let $F$  to be the pointwise limit of $(F_n)$, i.e.
 \[F(s)\coloneqq  \begin{cases}
 \sum_{k=1}^{\infty}f_k(s)\indicator{\mss}(s)\ & \text{ if }s\in\mss,\\
 0 & \text{ if }s\in\mss^c.
 \end{cases}\]
  Since $\lmeasfun$ is \sDc\  (see \Cref{res:measurable_function_space_is_sDc}), $F$ is measurable and $\abs{F}\leq G$.

By \cite[Lemma~4.0.2$(ii)$]{L-functional_analysis}, $\abs{F-F_n}^p\rightarrow 0$ pointwise, and $\abs{F-F_n}^p\leq (2G)^p$, so it follows from the Dominated convergence theorem, \Cref{res:DCT}, that $\displaystyle \int_S \abs{F-F_n}^p \d\mu \rightarrow 0$.

 We now verify that $F$ is the $\pnorm{\cdot}$-limit of $\sum_{k=1}^n f_k$.
Note that for every $n\in \N$,
\begin{align*}
 \norm{F - \sum_{k=1}^n f_k}_p^p &= \int_S \abs{F-\sum_{k=1}^nf_k}^p \d\mu\\
 &= \int_S \abs{F\indicator{\mss}-\sum_{k=1}^nf_k\indicator{\mss}}^p \d\mu + \int_S \abs{\mathbf{0}-\sum_{k=1}^nf_k\indicator{\mss^c}}^p \d\mu.
 \end{align*}
By the above we have that
\[
\int_S \abs{F\indicator{\mss}-\sum_{k=1}^nf_k\indicator{\mss}}^p \d\mu = \int_S \abs{F-F_n}^p \d\mu \rightarrow 0, \]
and furthermore,
\[0\leq \int_S \abs{0-\sum_{k=1}^nf_k\indicator{\mss^c}}^p \d\mu \leq \int_S \sum_{k=1}^n\abs{f_k}^p\indicator{\mss^c} \d\mu \leq \sum_{k=1}^n\pnorm{f_k}^p\npm(\mss^c)=0.\]
Therefore $\pnorm{F-\sum_{k=1}^\infty f_k}^p \rightarrow 0$ and so $\pnorm{F-\sum_{k=1}^\infty f_k} \rightarrow 0$ by \cite[Lemma~4.0.2 $(ii)$]{L-functional_analysis}.
	\end{proof}

\section{Completeness}\label{sec:sequential_completeness}

In the classical case, it would follow immediately from \Cref{Lp_is_sigma_complete-L-normed_space} that $\ellp$ is complete, since a (real or complex) normed space is complete if and only if every absolute convergent series is convergent.  However, it is unknown whether this equivalence holds for general $\ls$-normed spaces.  The following results gives sufficient conditions on $\ls$ so that any $\ls$-normed space in which every absolutely converging sequence converges is complete.

We first show that sequential completeness is equivalent to completeness whenever $\L$ has the countable sup property.

\begin{thm}\label{t:  L-normed space is complete iff sequentially complete}
Suppose that $\L$ satisfies the countable sup property.  Let $X$ be an $\L$-normed space.  Then $X$ is complete if and only if $X$ is sequentially complete.
\end{thm}

\begin{proof}
Clearly, if $X$ is complete, then it is sequentially complete.  Assume that $X$ is sequentially complete, and let $(x_\alpha)$ be a Cauchy net in $X$.  Then there exists a set $\Eps \subseteq \L^+$ such that $\inf \Eps = 0$, and, for every $\eps\in\Eps$ there exists $\alpha_\eps$ so that $\norm{x_\alpha-x_\beta}\leq \eps$ for all $\alpha,\beta \geq \alpha_\eps$.  Since $\L$ satisfies the countable sup property, there exists a sequence $(\eps_n)$ in $\Eps$ so that $\inf\{\eps_n ~:~ n\in\N\}=0$.  Select an increasing sequence of indices, $(\alpha_n)$, such that for every $n\in \N$, if $\alpha,\beta \geq \alpha_n$ then $\norm{x_\alpha-x_\beta}\leq \eps_n$.  Then $(x_{\alpha_n})$ is a Cauchy sequence in $X$.  By assumption, there exists $x\in X$ so that $x_{\alpha_n}\to x$.  For all $n\in \N$ and $\alpha \geq \alpha_n$ we have $\norm{x-x_\alpha} \leq \norm{x-x_{\alpha_n}}+\norm{x_{\alpha_n}-x_\alpha} \leq \norm{x-x_{\alpha_n}} + \eps_n$.  From this it follows that $x_\alpha \to x$.
\end{proof}

For the next result we need the following theorem about locally solid spaces, see \cite[Theorem 2.21]{aliprantis_burkinshaw_LOCALLY_SOLID_RIESZ_SPACES_WITH_APPLICATIONS_TO_ECONOMICS_SECOND_EDITION:2003}.

\begin{thm}\label{Thm:  Limit of increasing convergent sequence is supremum}
Let $E$ be a locally solid vector lattice, $(x_\alpha)$ an increasing net in $E$ and $x\in E$.  If $(x_\alpha)$ converges to $x$ then $x_\alpha \uparrow x$.
\end{thm}

\begin{thm}\label{t:series_completeness_implies_completeness}
Assume that $\L$ satisfies the countable sup property and admits a $\sigma$-order continuous completely metrizable locally solid topology.  Let $X$ be an $\L$-normed space such that every absolutely convergent series in $X$ converges. Then $X$ is complete.
\end{thm}

\begin{proof}
Let $d$ be a complete, translation invariant metric that induces a $\sigma$-order continuous topology on $\L$. According to \Cref{t:  L-normed space is complete iff sequentially complete}, it suffices to show that $X$ is sequentially complete.  Let $(x_n)$ be a Cauchy sequence in $X$, see \cite[Definition~3.2.1]{L-functional_analysis}. Since $\L$ has the countable sup property, there exists a sequence $(\eps_k)$ in $\L^+$ such that $\inf_k \eps_k = 0$ and for each $k \in \N$, there exists $n_k \in \N$ with $\norm{x_n  - x_m} \leq \eps_k$ for all $n,m \geq n_k$. By passing to finite infima we may assume that $(\eps_k)$ decreases to $0$. Since $d$ is $\sigma$-order continuous, $d(0, \eps_k) \to 0$, and by passing to a subsequence we may assume that $d(0, \eps_k) < 2^{-k}$ for every $k\in \N$ so that $\sum_{k=1}^\infty \eps_k$ converges by completeness of $d$. By \Cref{Thm:  Limit of increasing convergent sequence is supremum}, $\sum_{k=1}^\infty \eps_k$ converges in order.

We may assume that $(n_k)$ is strictly increasing and so $(x_{n_k})$ is a subsequence of $(x_n)$. For $k \geq 1$, define $y_k \coloneq x_{n_k} - x_{n_{k-1}}$ , where we let $x_0 = 0$.  Since
\[
\sum_{k=2}^\infty \norm{y_k} = \sum_{k=2}^\infty \norm{x_{n_k} - x_{n_{k-1}}} \leq \sum_{k=2}^\infty \eps_{k-1},
\]
which converges in order, $\sum_{k=1}^\infty y_k$ is an absolutely convergent series; hence it converges by assumption. The partial sums of $\sum_k y_k$ equals $x_{n_k}$ and so $(x_{n_k})$ converges. Therefore the Cauchy sequence $(x_n)$ has a converging subsequence, and the proof that such a sequence converges is exactly as in the classical case.
\end{proof}

Combining \Cref{t:series_completeness_implies_completeness}, \Cref{t:L^0_has_nice_topology}, and \Cref{Lp_is_sigma_complete-L-normed_space} yields the following.

\begin{corollary}\label{Cor:Lp sequentially complete}
    Let $1 \leq p < \infty$ and suppose that $\L = L^0(m)$ for some $\sigma$-finite measure $m$. Then $\ellp$ is complete.
\end{corollary}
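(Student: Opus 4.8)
The plan is to apply \Cref{t:series_completeness_implies_completeness} with this particular $\L$, so the task reduces to verifying its three hypotheses for $\L = L^0(m)$ with $m$ $\sigma$-finite, and then checking that the $L^p$-space $\ellp$ really is an $\L$-normed space to which \Cref{Lp_is_sigma_complete-L-normed_space} applies. First I would recall from the discussion around \eqref{e:L_inside_continuous_functions} and \Cref{r:hyperstonean} that $L^0(m)$ for a ($\sigma$-finite, hence localisable and semi-finite) measure $m$ is indeed of the form $C_\infty(K)$ for a hyper-Stonean $K$, and that it is a Dedekind complete unital $f$-algebra with unit the constant function $\mathbf 1$; this is the setting in which the whole paper operates, so $\ellp$ is defined and \Cref{Lp_is_sigma_complete-L-normed_space} holds, giving that every absolutely converging series in $\ellp$ converges.

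Next I would verify the three conditions on $\L$ in \Cref{t:series_completeness_implies_completeness}. The countable sup property of $L^0(m)$ for $\sigma$-finite $m$ is exactly the classical fact cited right after \Cref{t:L^0_has_nice_topology} (see \cite{luxemburg_zaanen_RIESZ_SPACES_VOLUME_I:1971}). For the topology, I would invoke \Cref{t:L^0_has_nice_topology}: since $m$ is $\sigma$-finite it is in particular localisable, so by \partref{iii} the topology $\tau$ of local convergence in measure is Hausdorff and $(L^0(m),\tau)$ is complete, and by \partref{ii} $\tau$ is metrizable; combining these, $\tau$ is a completely metrizable locally solid topology, and \Cref{t:L^0_has_nice_topology} also states it is order continuous, hence a fortiori $\sigma$-order continuous. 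Thus all three hypotheses hold.

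Finally, \Cref{t:series_completeness_implies_completeness} applied to $X = \ellp$ yields that $\ellp$ is sequentially complete, which is the claim. I do not expect any genuine obstacle here: the corollary is essentially a bookkeeping exercise that lines up the hypotheses of the abstract completeness theorem with the known structure theory of $L^0$ of a $\sigma$-finite measure. The only point requiring a little care is to make sure the "completely metrizable" requirement is read as "metrizable and (topologically) complete", both of which \Cref{t:L^0_has_nice_topology} supplies in the $\sigma$-finite (equivalently, here, metrizable-and-localisable) case — this is why $\sigma$-finiteness, rather than mere localisability, is assumed.
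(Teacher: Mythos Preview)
Your proposal is correct and follows exactly the approach indicated by the paper, which simply states that the corollary follows by combining \Cref{t:series_completeness_implies_completeness}, \Cref{t:L^0_has_nice_topology}, and \Cref{Lp_is_sigma_complete-L-normed_space}. You have merely (and correctly) made explicit how the hypotheses of \Cref{t:series_completeness_implies_completeness} are verified from \Cref{t:L^0_has_nice_topology} and the countable sup property of $L^0(m)$ for $\sigma$-finite $m$.
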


We remark that \Cref{t:  L-normed space is complete iff sequentially complete} is related to a general result in the theory of (real or complex) convergence vector spaces:  A first countable convergence vector space is complete if and only if it is sequentially complete, see for instance \cite[Proposition 3.6.5]{beattie_butzmann_CONVERGENCE_STRUCTURES_AND_APPLICATIONS_TO_FUNCTIONAL_ANALYSIS:2002}.  In the case of an Archimedean vector lattice $X$ and order convergence, this reduces to the well known fact that if $X$ has the countable sup property, then it is Dedekind complete if and only it is complete with respect to order convergence, if and only if it is Dedekind $\sigma$-complete, see e.g. \cite[Proposition 9.10]{OBrien_vanderwalt_troitsky_convergence_structures} and \cite[Theorem 23.6]{luxemburg_zaanen_RIESZ_SPACES_VOLUME_I:1971}.

\subsection*{Acknowledgements}

The authors thank Marcel de Jeu for helpful discussions.  Part of this work was completed during a visit of Marten Wortel to Sichuan University. This visit was made possible by the financial support of the National Natural Science Foundation of China, Grant number 12201439.
	
\bibliographystyle{plain}
\urlstyle{same}

\bibliography{L-valued_integration}

\end{document}